\documentclass[11pt,a4paper]{article}
\usepackage[T1]{fontenc}
\usepackage[utf8]{inputenc}
\usepackage{lmodern,amsmath,amssymb,amsthm,mathtools,booktabs,array,microtype}
\usepackage[margin=27mm]{geometry}
\usepackage[numbers,sort&compress]{natbib}
\usepackage[colorlinks=true,linkcolor=blue,citecolor=blue,urlcolor=blue]{hyperref}
\hypersetup{pdftitle={Biderivations of Null-Filiform and Filiform Zinbiel Algebras},pdfauthor={Arislanbay Abdireymov and Bakhtiyor Yusupov}}
\newtheorem{theorem}{Theorem}[section]
\newtheorem{proposition}[theorem]{Proposition}
\newtheorem{lemma}[theorem]{Lemma}
\newtheorem{corollary}[theorem]{Corollary}
\theoremstyle{definition}
\newtheorem{definition}[theorem]{Definition}
\newtheorem{example}[theorem]{Example}
\theoremstyle{remark}\newtheorem{remark}[theorem]{Remark}
\DeclareMathOperator{\Ann}{Ann}\DeclareMathOperator{\Der}{Der}
\DeclareMathOperator{\Bider}{Bider}\DeclareMathOperator{\End}{End}
\DeclareMathOperator{\Hom}{Hom}\DeclareMathOperator{\rank}{rank}
\DeclareMathOperator{\tr}{tr}\DeclareMathOperator{\rad}{rad}
\newcommand{\bsym}{\Bider_{\mathrm{sym}}}\newcommand{\bskew}{\Bider_{\mathrm{skew}}}
\newcommand{\bann}{\Bider_{\Ann}}\newcommand{\NF}{\mathrm{NF}}
\newcommand{\K}{\mathbb K}\newcommand{\C}{\mathbb C}
\newcommand{\spn}[1]{\langle #1\rangle}
\numberwithin{equation}{section}
\title{Biderivations of Null-Filiform and Filiform Zinbiel Algebras}
\author{Arislanbay Abdireymov\thanks{V.~I.~Romanovskiy Institute of Mathematics, Academy of Sciences of Uzbekistan, Tashkent, Uzbekistan. Email: \texttt{aaabdireymov1001@gmail.com}.}
\and Bakhtiyor Yusupov\thanks{Corresponding author. V.~I.~Romanovskiy Institute of Mathematics, Academy of Sciences of Uzbekistan, Tashkent, Uzbekistan; Department of Applied Mathematics and Mathematical Physics, Urgench State University, Urgench, Uzbekistan. Email: \texttt{baxtiyor\_yusupov\_93@mail.ru}.}}
\date{}
\begin{document}\maketitle
\begin{abstract}
A biderivation of an algebra is a bilinear map which is a derivation in each argument. We describe all biderivations of finite-dimensional complex null-filiform and filiform Zinbiel algebras. For the $n$-dimensional null-filiform algebra, every biderivation is determined by an arbitrary value at its generating pair; the space of biderivations has dimension $n$ and consists entirely of symmetric maps. For the three standard filiform families, we give explicit bases and determine their symmetric and skew-symmetric parts. We also treat all filiform algebras of dimensions at most four, including the exceptional families not covered by the classification in dimensions at least five. A three-dimensional algebra admits a three-dimensional space of skew-symmetric biderivations, two of which have values outside its annihilator. Up to isomorphism, it is the unique complex filiform Zinbiel algebra with this property. The proofs use generators, annihilator-valued maps, a truncated polynomial model, and conformal endomorphisms of bilinear forms.
\end{abstract}
\noindent\textbf{Keywords:} Zinbiel algebra; biderivation; filiform algebra; null-filiform algebra; annihilator.\par
\noindent\textbf{2020 Mathematics Subject Classification:} 17A32, 17A36.

\section{Introduction}
Zinbiel algebras arose in Loday's study of the cup product on Leibniz cohomology \cite{Loday}. Their defining identity is dual, in the operadic sense, to the Leibniz identity. The cohomological background is developed by Loday and Pirashvili \cite{LP}; Koszul duality and its general framework are treated by Ginzburg and Kapranov \cite{GK} and Loday and Vallette \cite{LV}. Although Zinbiel multiplication is generally neither associative nor commutative, its symmetrization is associative and commutative. This combination makes questions about derivations and related maps accessible through several complementary methods.

The nilpotent structure of finite-dimensional Zinbiel algebras is a central part of their theory. Dzhumadil'daev and Tulenbaev studied nilpotency in characteristic zero \cite{DT}; Towers proved nilpotency over an arbitrary field \cite{Towers}. Identities in dual Leibniz algebras were investigated by Naurazbekova and Umirbaev \cite{NU}. Classifications under restrictions on the nilindex, gradation, or characteristic sequence provide explicit multiplication tables suitable for studying linear and multilinear operators. In particular, null-filiform and filiform algebras were described by Adashev, Khudoyberdiyev and Omirov \cite{AKO}. Related classification results include naturally graded Zinbiel algebras with nilindex $n-3$ \cite{ACGK} and those with characteristic sequence $(n-p,p)$ \cite{ALO}.

A bilinear map which is a derivation separately in each variable is called a biderivation. Such maps have a substantial history in associative ring theory. Bre\v sar studied generalized biderivations and their relation to commuting maps \cite{Bresar95,Bresar04}; subsequent results include biderivations of triangular algebras \cite{Benkovic} and generalized matrix algebras \cite{DuWang}. In Lie algebra theory, biderivations have been studied on parabolic subalgebras of simple Lie algebras \cite{WYC}, finite-dimensional complex simple Lie algebras \cite{Tang}, and Block Lie algebras \cite{LGZ}. Bre\v sar and Zhao developed connections with commuting linear maps \cite{BZ}. More recently, Chen, Yao and Zhao studied symmetric biderivation radicals and used them to determine biderivations for several Lie algebra classes \cite{CYZ}. The corresponding superalgebra setting was investigated by Yuan and Tang \cite{YT}.

The bilinear notion used here is distinct from the notion of a pair of linear maps that is also called a biderivation in parts of the Leibniz algebra literature. We do not impose symmetry in the definition. Instead, we determine the full space and then identify its symmetric and skew-symmetric subspaces. Results for Lie or associative algebras do not directly determine these spaces for Zinbiel algebras, whose multiplication is neither skew-symmetric nor generally associative.

The purpose of this paper is to give explicit classifications for null-filiform and filiform Zinbiel algebras. The null-filiform algebra is generated by one element, and for dimension at least five the complex filiform algebras belong to three families described in \cite{AKO}. These facts suggest reducing a bilinear map to its values on pairs of generators. This determines uniqueness, but it does not guarantee that arbitrary generator values extend. Establishing the allowable values is the central step of the classification.

For the null-filiform algebra we use a truncated polynomial realization. Derivations have the form $f\mapsto pf'$ and biderivations have the form $(f,g)\mapsto rf'g'$. This gives both a classification and a direct explanation of their symmetry. For the standard filiform families, annihilator-valued maps account for all skew-symmetric biderivations, while symmetric maps supply the remaining directions. The main dimension formulas are
\begin{center}\begin{tabular}{@{}lcccc@{}}\toprule
$A$ & Range & $\dim\Bider(A)$ & $\dim\bsym(A)$ & $\dim\bskew(A)$\\\midrule
$\NF_n$ & $n\ge1$ & $n$ & $n$ & $0$\\
$F_n^1$ & $n\ge3$ & $n+6$ & $n+4$ & $2$\\
$F_n^2$ & $n\ge4$ & $n+3$ & $n+2$ & $1$\\
$F_n^3$ & $n\ge4$ & $n+1$ & $n$ & $1$\\\bottomrule
\end{tabular}\end{center}
Small dimensions require separate treatment: the three standard families do not exhaust dimension three or four. We classify the filiform algebras in those dimensions and compute their biderivations. The three-dimensional case reduces to bilinear forms on a two-dimensional space. One alternating form produces an exceptional algebra for which skew-symmetric biderivations need not take values in the annihilator. This identifies the precise limitation of a structural property shared by the standard families.

The paper is organized as follows. Section~2 establishes general properties. Sections~3 and~4 treat the null-filiform algebra and the standard filiform families. Sections~5 and~6 address dimensions three and four. Section~7 collects consequences and describes exact computational checks, and Section~8 presents further questions. The arbitrary-dimensional results are proved mathematically; the accompanying rational-arithmetic script provides supplementary finite-dimensional checks.

\section{Preliminaries and general properties}
Unless otherwise indicated, $\K$ is a field of characteristic zero. Classifications up to isomorphism are stated over $\C$.
\begin{definition}
A Zinbiel algebra is a vector space $A$ with a bilinear multiplication $\circ$ satisfying
\begin{equation}\label{zin}(x\circ y)\circ z=x\circ(y\circ z)+x\circ(z\circ y),\qquad x,y,z\in A.\end{equation}
A linear map $D:A\to A$ is a derivation if $D(x\circ y)=D(x)\circ y+x\circ D(y)$. A bilinear map $B:A\times A\to A$ is a biderivation if
\begin{align}
B(x\circ y,z)&=B(x,z)\circ y+x\circ B(y,z),\label{bi1}\\
B(x,y\circ z)&=B(x,y)\circ z+y\circ B(x,z)\label{bi2}
\end{align}
for all $x,y,z\in A$.
\end{definition}
We denote the spaces of all, symmetric, and skew-symmetric biderivations by $\Bider(A)$, $\bsym(A)$, and $\bskew(A)$. Put
\[A^2=\operatorname{span}\{x\circ y:x,y\in A\},\qquad \Ann(A)=\{a:a\circ A=A\circ a=0\}.\]
For a basis $e_1,\ldots,e_n$, let $E_{p,q}^v$ take $(e_p,e_q)$ to $v$ and all other basis pairs to zero. All omitted products and omitted values of specified maps are zero.
\subsection{Symmetry, annihilators, and generators}
The next results hold for any algebra with a bilinear multiplication.
\begin{proposition}\label{sym}
The spaces $\Bider(A)$, $\bsym(A)$, and $\bskew(A)$ are vector spaces. If $B^\tau(x,y)=B(y,x)$, then $B\in\Bider(A)$ implies $B^\tau\in\Bider(A)$. Consequently,
\[\Bider(A)=\bsym(A)\oplus\bskew(A).\]
A symmetric or skew-symmetric bilinear map satisfying \eqref{bi1} also satisfies \eqref{bi2}.
\end{proposition}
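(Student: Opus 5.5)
The proof will proceed directly from the definitions; no earlier result is needed. First, conditions \eqref{bi1} and \eqref{bi2} are linear in $B$ for fixed $x,y,z$, since multiplication is bilinear. Hence $\Bider(A)$ is a subspace of the space of bilinear maps $A\times A\to A$. The symmetry and skew-symmetry conditions $B(x,y)=\pm B(y,x)$ are also linear, so $\bsym(A)$ and $\bskew(A)$ are intersections of subspaces and are therefore vector spaces.

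Next, I will verify that $\Bider(A)$ is closed under the transpose $B^\tau(x,y)=B(y,x)$. The point is that \eqref{bi1} for $B$ is exactly \eqref{bi2} for $B^\tau$, after renaming the variables. For \eqref{bi1} for $B^\tau$, compute
\[
B^\tau(x\circ y,z)=B(z,x\circ y).
\]
By \eqref{bi2} for $B$, this equals
\[
B(z,x)\circ y+x\circ B(z,y)=B^\tau(x,z)\circ y+x\circ B^\tau(y,z).
\]
Similarly, \eqref{bi2} for $B^\tau$ follows from \eqref{bi1} for $B$:
\[
B^\tau(x,y\circ z)=B(y\circ z,x)=B(y,x)\circ z+y\circ B(z,x)=B^\tau(x,y)\circ z+y\circ B^\tau(x,z).
\]

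For the decomposition, write
\[
B=\tfrac12(B+B^\tau)+\tfrac12(B-B^\tau).
\]
This uses characteristic $\ne2$, which holds since $\K$ has characteristic zero. By the previous step, both summands lie in $\Bider(A)$; they are symmetric and skew-symmetric respectively. The sum is direct because a map that is both symmetric and skew-symmetric satisfies $B=-B$, so $B=0$.

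Finally, suppose $B^\tau=\varepsilon B$ with $\varepsilon=\pm1$, and suppose $B$ satisfies \eqref{bi1}. Then
\[
B(x,y\circ z)=\varepsilon B(y\circ z,x)=\varepsilon\bigl(B(y,x)\circ z+y\circ B(z,x)\bigr)=B(x,y)\circ z+y\circ B(x,z),
\]
using $\varepsilon B(y,x)=B(x,y)$ and $\varepsilon B(z,x)=B(x,z)$. This is \eqref{bi2}.

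There is no real obstacle here. The only care needed is in tracking the variable renaming in the transpose step, and in noting that the decomposition requires $2$ to be invertible.
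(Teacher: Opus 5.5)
Your proof is correct and follows the paper's own argument. It uses the same four steps: linearity of the defining identities; the observation that transposition interchanges \eqref{bi1} and \eqref{bi2}; the decomposition $B=\tfrac12(B+B^\tau)+\tfrac12(B-B^\tau)$, with directness from $2B=0$; and the equivalence of the two identities when $B^\tau=\pm B$. You simply write out the variable renamings that the paper leaves implicit.
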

\begin{proof}
The defining identities are linear in $B$. Interchanging the arguments converts \eqref{bi1} for $B^\tau$ into \eqref{bi2} for $B$, and conversely. Thus $B=\frac12(B+B^\tau)+\frac12(B-B^\tau)$ is the asserted decomposition. The intersection is zero because $2B=0$ for a map which is both symmetric and skew-symmetric. If $B^\tau=\pm B$, the two identities are equivalent.
\end{proof}
\begin{proposition}\label{preserve}
Every biderivation satisfies
\[B(A^2,A)+B(A,A^2)\subseteq A^2,\quad B(\Ann(A),A)+B(A,\Ann(A))\subseteq\Ann(A).\]
\end{proposition}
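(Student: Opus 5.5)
The proof reads both inclusions directly off the defining identities \eqref{bi1} and \eqref{bi2}. It uses only bilinearity, not the Zinbiel identity \eqref{zin}, which matches the remark that these results hold for any algebra with a bilinear multiplication.

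\emph{First inclusion.} Since $A^2$ is spanned by products $x\circ y$ and $B$ is bilinear, it suffices to treat generators. By \eqref{bi1},
\[
B(x\circ y,z)=B(x,z)\circ y+x\circ B(y,z).
\]
Both summands on the right are products of two elements of $A$, so both lie in $A^2$. Hence $B(A^2,A)\subseteq A^2$. Likewise, \eqref{bi2} writes $B(x,y\circ z)$ as $B(x,y)\circ z+y\circ B(x,z)$, which lies in $A^2$, so $B(A,A^2)\subseteq A^2$. Since $A^2$ is a subspace, the sum of the two sets is also contained in it.

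\emph{Second inclusion.} Fix $a\in\Ann(A)$ and $z\in A$. We must show that $B(a,z)\circ y=0$ and $y\circ B(a,z)=0$ for every $y\in A$.

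For the right product, apply \eqref{bi1} with $x=a$:
\[
B(a\circ y,z)=B(a,z)\circ y+a\circ B(y,z).
\]
The left side is $B(0,z)=0$ and $a\circ B(y,z)=0$, so $B(a,z)\circ y=0$.

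For the left product, apply \eqref{bi1} with $y=a$ and $x$ arbitrary:
\[
B(x\circ a,z)=B(x,z)\circ a+x\circ B(a,z).
\]
Since $x\circ a=0$ and $B(x,z)\circ a=0$, this gives $x\circ B(a,z)=0$. Therefore $B(a,z)\in\Ann(A)$.

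The inclusion $B(A,\Ann(A))\subseteq\Ann(A)$ follows by the symmetric argument with \eqref{bi2}. Alternatively, apply the result just proved to $B^\tau$, which is a biderivation by Proposition~\ref{sym}.

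The only point needing care is choosing the substitution in \eqref{bi1} that isolates the required product. The left annihilation comes from putting $a$ in the second slot, and the right annihilation from putting it in the first slot. I expect no genuine obstacle beyond this.
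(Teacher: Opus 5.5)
Your proposal is correct and follows essentially the same route as the paper: the first inclusion is read off directly from \eqref{bi1}--\eqref{bi2}, and the second comes from evaluating \eqref{bi1} on $a\circ y$ and $x\circ a$ (you simply make explicit why the extra terms vanish), with the second argument handled via \eqref{bi2}. Your alternative of applying the result to $B^\tau$ using Proposition~\ref{sym} is also valid.
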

\begin{proof}
The first inclusion follows from \eqref{bi1}--\eqref{bi2}. If $a\in\Ann(A)$, then
\[0=B(a\circ x,z)=B(a,z)\circ x,\qquad 0=B(x\circ a,z)=x\circ B(a,z).\]
Hence $B(a,z)\in\Ann(A)$. The second argument is treated using \eqref{bi2}.
\end{proof}
\begin{theorem}\label{ann}
Let $\bann(A)=\{B\in\Bider(A):B(A,A)\subseteq\Ann(A)\}$. Then:
\begin{enumerate}
\item A bilinear map $B:A\times A\to\Ann(A)$ is a biderivation if and only if $B(A^2,A)=B(A,A^2)=0$.
\item There is a natural linear isomorphism
\[\bann(A)\cong\Hom\big((A/A^2)\otimes(A/A^2),\Ann(A)\big).\]
\item If $d=\dim(A/A^2)$ and $a=\dim\Ann(A)$ are finite, then
\begin{align*}
\dim\bann(A)&=d^2a,\\
\dim(\bann(A)\cap\bsym(A))&=\frac{d(d+1)}2a,\\
\dim(\bann(A)\cap\bskew(A))&=\frac{d(d-1)}2a.
\end{align*}
\end{enumerate}
\end{theorem}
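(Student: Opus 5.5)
The plan is to prove part~(1) directly from the definitions, then obtain~(2) by factoring through the quotient, and finally read off~(3) from~(2) by counting dimensions.

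For part~(1), let $B:A\times A\to\Ann(A)$ be bilinear. Every value $B(u,v)$ lies in $\Ann(A)$, so both terms on the right of \eqref{bi1}, namely $B(x,z)\circ y$ and $x\circ B(y,z)$, vanish. Likewise both terms on the right of \eqref{bi2} vanish. Hence \eqref{bi1} holds exactly when $B(x\circ y,z)=0$ for all $x,y,z$, and \eqref{bi2} holds exactly when $B(x,y\circ z)=0$ for all $x,y,z$. By bilinearity and the definition of $A^2$ as the span of products, these two conditions are equivalent to $B(A^2,A)=0$ and $B(A,A^2)=0$. This settles~(1).

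For part~(2), part~(1) identifies $\bann(A)$ with the space of bilinear maps $A\times A\to\Ann(A)$ that vanish whenever one argument lies in $A^2$. Such a map factors uniquely through $(A/A^2)\times(A/A^2)$, and conversely any bilinear map on the quotient pulls back to one of these maps. This gives a linear bijection with bilinear maps $(A/A^2)\times(A/A^2)\to\Ann(A)$. By the universal property of the tensor product, these are exactly the elements of $\Hom\big((A/A^2)\otimes(A/A^2),\Ann(A)\big)$. The isomorphism is natural since no choices are made. Naturality is the only point needing care, and the absence of choices handles it.

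For part~(3), the dimension $d^2a$ follows at once from~(2). The transposition $B\mapsto B^\tau$ preserves $\bann(A)$ and, by Proposition~\ref{sym}, splits it into symmetric and skew parts. Under the isomorphism these correspond to $\Hom(S^2(A/A^2),\Ann(A))$ and $\Hom(\Lambda^2(A/A^2),\Ann(A))$. Concretely, choose $e_1,\dots,e_d$ spanning a complement of $A^2$ and a basis $f_1,\dots,f_a$ of $\Ann(A)$, and use the maps $E^{f_k}_{p,q}$. A symmetric element is then determined by its values on pairs with $p\le q$, and a skew element by its values on pairs with $p<q$; here characteristic $\neq2$ forces the diagonal values to vanish in the skew case. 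This gives the counts $\frac{d(d+1)}2a$ and $\frac{d(d-1)}2a$. No step is a genuine obstacle; the argument only requires carefully stating the factorization through the quotient.
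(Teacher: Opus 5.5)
Your proposal is correct and follows the paper's proof essentially step for step: the annihilator kills the right-hand sides of \eqref{bi1}--\eqref{bi2}, so the identities reduce to vanishing whenever either argument lies in $A^2$. This gives the factorization through $(A/A^2)\times(A/A^2)$, and the dimension counts follow by choosing bases; your explicit remark that characteristic $\neq 2$ forces the diagonal entries of a skew-symmetric coefficient array to vanish is a detail the paper leaves implicit.
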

\begin{proof}
If the image is in the annihilator, the right-hand sides of both identities vanish. The identities therefore say exactly that $B$ vanishes whenever either argument belongs to $A^2$. Such maps factor uniquely through $(A/A^2)\times(A/A^2)$; conversely, every bilinear map on this quotient with values in $\Ann(A)$ lifts to a biderivation. Choosing bases gives arbitrary, symmetric, and skew-symmetric coefficient arrays with $d^2a$, $d(d+1)a/2$, and $d(d-1)a/2$ independent entries.
\end{proof}
\begin{proposition}\label{generators}
If $S$ generates $A$ as an algebra, a biderivation is uniquely determined by its restriction to $S\times S$. A derivation is uniquely determined by its values on $S$.
\end{proposition}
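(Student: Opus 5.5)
By linearity it suffices to show that a biderivation $B$ (respectively a derivation $D$) which vanishes on $S\times S$ (respectively on $S$) is identically zero. The difference of two biderivations is again a biderivation by Proposition~\ref{sym}, and the difference of two derivations is a derivation, so this reduction loses nothing. Let $M_1=\operatorname{span} S$, and for $k\ge2$ let $M_k$ be the span of all products $u\circ v$ with $u\in M_i$, $v\in M_j$, $i+j=k$. Since $S$ generates $A$, every element of $A$ is a finite linear combination of nonassociative monomials in $S$. Hence $A=\sum_{k\ge1}M_k$, and we can argue by induction on the degree $k$.

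For the derivation statement, suppose $D$ vanishes on $M_i$ for all $i<k$. Take $u\in M_i$ and $v\in M_j$ with $i+j=k$. Then $D(u\circ v)=D(u)\circ v+u\circ D(v)=0$. By linearity $D$ vanishes on $M_k$, so $D=0$.

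For the biderivation statement, I would proceed in two stages. First, fix $s\in S$. The map $x\mapsto B(x,s)$ is a derivation of the form $D_1$ by \eqref{bi1}, read as $B(x\circ y,s)=B(x,s)\circ y+x\circ B(y,s)$. This derivation vanishes on $S$, so by the derivation case $B(x,s)=0$ for all $x\in A$ and all $s\in S$. Second, fix an arbitrary $x\in A$. By \eqref{bi2}, the map $y\mapsto B(x,y)$ satisfies $B(x,y\circ z)=B(x,y)\circ z+y\circ B(x,z)$, so it is a derivation in its second argument. By the first stage it vanishes on $S$, and therefore it vanishes identically by the derivation case again. Hence $B=0$.

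There is no serious obstacle here. The only point needing care is that the partial maps really are derivations: \eqref{bi1} makes $B(\cdot,z)$ a derivation for each fixed $z$, and \eqref{bi2} makes $B(x,\cdot)$ a derivation for each fixed $x$. We also need that $A$ is spanned by the monomials in $S$, which is exactly what it means for $S$ to generate $A$ as a (nonassociative) algebra. The argument uses no Zinbiel identity, so the proposition holds for any algebra with a bilinear multiplication.
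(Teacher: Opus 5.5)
Your proof is correct and is essentially the paper's argument. The paper phrases the derivation step as ``the kernel of a derivation is a subalgebra containing $S$, hence equals $A$'', which your induction on monomial degree simply unpacks; the paper also treats the two arguments in the mirror-image order (first $B(s,\cdot)$, then $B(\cdot,y)$), which makes no difference.
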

\begin{proof}
It suffices to prove that vanishing on the specified set implies vanishing everywhere. If $B(S,S)=0$, fix $s\in S$. The kernel of the derivation $B(s,\cdot)$ is a subalgebra containing $S$, hence equals $A$. Thus $B(S,A)=0$. For each $y\in A$, the kernel of $B(\cdot,y)$ now contains $S$ and again equals $A$. The derivation assertion follows from the same argument.
\end{proof}
\subsection{Powers and filiform algebras}
For a Zinbiel algebra define $A^1=A$ and
\[A^k=\sum_{i=1}^{k-1}A^i\circ A^{k-i}\quad(k\ge2).
\]
These powers satisfy $A^i\circ A^j\subseteq A^{i+j}$ and $A^{k+1}\subseteq A^k$.
\begin{lemma}\label{powers}
For a Zinbiel algebra, $A^k=A\circ A^{k-1}$ for every $k\ge2$.
\end{lemma}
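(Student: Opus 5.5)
We argue by induction on $k$, proving the equivalent statement that $A^i\circ A^j\subseteq A\circ A^{i+j-1}$ for all $i,j\ge1$. Summing over $i+j=k$ then gives $A^k\subseteq A\circ A^{k-1}$. The reverse inclusion $A\circ A^{k-1}\subseteq A^k$ holds by the definition of $A^k$, since $A=A^1$ and the term with $i=1$ is present. For $k=2$ there is nothing to prove, because $A^2=A\circ A$ by definition.

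Suppose $A^m=A\circ A^{m-1}$ for all $2\le m<k$, and take $i\ge2$ with $i+j=k$. By the induction hypothesis $A^i=A\circ A^{i-1}$, so a spanning element of $A^i\circ A^j$ has the form $(x\circ y)\circ z$ with $x\in A$, $y\in A^{i-1}$, $z\in A^j$. The Zinbiel identity \eqref{zin} gives
\[(x\circ y)\circ z=x\circ(y\circ z)+x\circ(z\circ y).\]
Here $y\circ z\in A^{i-1}\circ A^j\subseteq A^{k-1}$, and likewise $z\circ y\in A^j\circ A^{i-1}\subseteq A^{k-1}$, using the containment $A^p\circ A^q\subseteq A^{p+q}$ stated just before the lemma. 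Both terms therefore lie in $A\circ A^{k-1}$. The case $i=1$ is trivially of this form. Hence every summand $A^i\circ A^{k-i}$ of $A^k$ is contained in $A\circ A^{k-1}$.

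The only step needing care is that $A^i\circ A^j\subseteq A^{i+j}$ is used legitimately. This containment follows from the recursive definition by a routine induction, and the paper records it as a standing fact, so we may invoke it. The argument uses only the Zinbiel identity and no finiteness assumption, so the lemma holds over any field.
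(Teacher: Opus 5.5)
Your proof is correct and follows the paper's argument: induction on $k$, applying the inductive hypothesis to $A^i$ for $i\ge2$, then using the Zinbiel identity to rewrite each spanning element $(x\circ y)\circ z$ of $A^i\circ A^{k-i}$ as an element of $A\circ A^{k-1}$. One small point: $A^p\circ A^q\subseteq A^{p+q}$ is simply the $i=p$ summand in the definition of $A^{p+q}$, so it needs no separate induction.
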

\begin{proof}
The inclusion from right to left is in the definition. Induct on $k$ for the converse. If $i\ge2$, induction gives $A^i=A\circ A^{i-1}$. By \eqref{zin}, a spanning element $(x\circ u)\circ v$ of $A^i\circ A^{k-i}$ equals $x\circ(u\circ v+v\circ u)$ and belongs to $A\circ A^{k-1}$. The term $i=1$ already has this form. The descending property follows inductively from the definition, starting with $A^2\subseteq A$.
\end{proof}
\begin{definition}
An $n$-dimensional Zinbiel algebra is null-filiform if $\dim A^k=n+1-k$ for $1\le k\le n+1$. It is filiform if $\dim A^k=n-k$ for $2\le k\le n$.
\end{definition}
For filiform algebras of dimension $n\ge2$, $\dim(A/A^2)=2$. For $n=1$ the condition is empty; we include the unique one-dimensional Zinbiel algebra as a degenerate case. This convention does not affect dimensions at least two.

\section{Biderivations of the null-filiform algebra}
By \cite{AKO}, the null-filiform algebra $\NF_n$ has a basis $e_1,\ldots,e_n$ with
\begin{equation}\label{nf}e_i\circ e_j=\binom{i+j-1}{j}e_{i+j}\quad(i+j\le n).\end{equation}
It is generated by $e_1$, since $e_1\circ e_i=e_{i+1}$.
\subsection{A polynomial model and its derivations}
Consider
\[P_n=t\K[t]/t^{n+1}\K[t],\qquad f\circ g=\int_0^t f'(s)g(s)\,ds\pmod{t^{n+1}}.\]
Integration is formal polynomial integration in characteristic zero. Changing either input by an element of $t^{n+1}\K[t]$ changes the output by such an element, so the product is well defined. The map $e_i\mapsto t^i/i!$ identifies \eqref{nf} with this product. Moreover, $g\circ h+h\circ g=gh$, so
\[x\circ(g\circ h+h\circ g)=\int_0^t x'gh=\int_0^t(x\circ g)'h=(x\circ g)\circ h.\]
Thus $P_n$ is Zinbiel.
\begin{lemma}\label{polyder}
Every derivation of $P_n$ has a unique expression $D_p(f)=pf'$, with $p\in P_n$. In particular, $\dim\Der(\NF_n)=n$.
\end{lemma}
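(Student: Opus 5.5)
We prove the two halves separately: every map $D_p$ is a derivation, and every derivation has this form, with $p$ uniquely determined.

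\emph{Step 1: $D_p$ is a derivation.} We first check that $D_p$ is well defined on $P_n$. Since $p\in tK[t]$, changing $f$ by a multiple of $t^{n+1}$ changes $f'$ by a multiple of $t^n$, so it changes $pf'$ by a multiple of $t^{n+1}$. Also $pf'$ has zero constant term, so $D_p(P_n)\subseteq P_n$.

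Next we verify the Leibniz rule. Write $F=f\circ g$. Modulo $t^{n+1}$ we have $F'=f'g$, so
\[D_p(f\circ g)=pf'g.\]
For the right-hand side,
\[D_p(f)\circ g+f\circ D_p(g)=\int_0^t\big((pf')'g+f'pg'\big)=\int_0^t\big(p'f'g+pf''g+pf'g'\big).\]
This does not visibly equal $pf'g$. We therefore compare derivatives: the derivative of the right-hand side is $p'f'g+p(f'g)'$, while the derivative of the left-hand side is $(pf'g)'=p'f'g+p(f'g)'$. Both sides vanish at $t=0$, so they agree as polynomials. They also agree modulo $t^{n+1}$, because the derivatives agree modulo $t^n$ and formal integration from $0$ raises degree by one. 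Hence $D_p\in\Der(P_n)$.

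\emph{Step 2: uniqueness of $p$.} Setting $f=t$ gives $D_p(t)=p$, so $p$ is recovered from $D_p$ and $p\mapsto D_p$ is injective.

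\emph{Step 3: every derivation has the form $D_p$.} Let $D$ be any derivation and put $p=D(t)\in P_n$. Then $D-D_p$ is a derivation vanishing on $t$. Since $t$ corresponds to $e_1$, which generates $\NF_n$, Proposition~\ref{generators} gives $D=D_p$. Under the identification $e_i\mapsto t^i/i!$, this yields $\Der(\NF_n)\cong P_n$, so $\dim\Der(\NF_n)=n$.

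The main obstacle is Step 1, in particular handling the truncation modulo $t^{n+1}$ correctly. Differentiation lowers degree and integration raises it, and the argument must show that $t^{n+1}$-ambiguity never leaks into degrees $\le n$. The identity $F'=f'g$ (mod $t^n$) together with the vanishing at $0$ settles this.
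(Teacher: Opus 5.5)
Your proof is correct and follows essentially the same route as the paper. Both proofs get well-definedness from $p(0)=0$, prove the Leibniz rule by the fundamental theorem of calculus, and obtain existence and uniqueness by evaluating at the generator $t$ and invoking Proposition~\ref{generators}; the paper simply notes that the integrand $(pf')'g+f'pg'$ is already $(pf'g)'$ by the product rule, which is what your comparison of derivatives amounts to. One small slip: $F'=f'g$ holds modulo $t^n$, not $t^{n+1}$ (as you correctly say at the end), and it is the factor $p$ with zero constant term that upgrades this to $pF'\equiv pf'g\pmod{t^{n+1}}$.
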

\begin{proof}
Differentiation alone does not descend to the quotient, but if $p(0)=0$, then $p(t)(t^{n+1}q)'$ is divisible by $t^{n+1}$. Thus $D_p$ is well defined and depends only on the class of $p$. For representatives,
\begin{align*}
(pf')\circ g+f\circ(pg')
&=\int_0^t\big((pf')'g+f'pg'\big)\,ds\\
&=\int_0^t(pf'g)'\,ds=pf'g=D_p(f\circ g).
\end{align*}
The boundary term vanishes because $p(0)=0$. Conversely, $t$ generates the algebra, so $D$ is determined by $D(t)$. Since $D_{D(t)}(t)=D(t)$, Proposition~\ref{generators} gives $D=D_{D(t)}$. Evaluation at $t$ proves uniqueness.
\end{proof}
\subsection{The classification theorem}
For $1\le k\le n$ define
\begin{equation}\label{Bk}B_k(e_i,e_j)=\frac{(i+j+k-2)!}{k!(i-1)!(j-1)!}\,e_{i+j+k-2},\end{equation}
where $e_\ell=0$ for $\ell>n$.
\begin{theorem}\label{null}
Every biderivation of $\NF_n$ has a unique expression
\[B=\sum_{k=1}^n r_kB_k,\qquad B(e_1,e_1)=\sum_{k=1}^n r_ke_k,\]
with arbitrary $r_k\in\K$. Equivalently,
\[B(e_i,e_j)=\sum_{k=1}^{\min\{n,n-i-j+2\}}r_k\frac{(i+j+k-2)!}{k!(i-1)!(j-1)!}\,e_{i+j+k-2},\]
where a sum with upper bound less than $1$ is zero. Consequently,
\[\dim\Bider(\NF_n)=\dim\bsym(\NF_n)=n,\qquad\bskew(\NF_n)=0.\]
\end{theorem}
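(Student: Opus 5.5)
Work in the polynomial model $P_n$ of Lemma~\ref{polyder} and show that every biderivation has the form $B_r(f,g)=r\,f'g'$ with $r\in P_n$. Then translate back to the basis $e_i$.

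\emph{Step 1: $B_r$ is well defined and is a biderivation.} Take $r\in P_n$, so $r(0)=0$. For fixed $g$, the map $f\mapsto r f'g'$ is $D_{rg'}$. Here $rg'\in P_n$ because $r(0)=0$. By Lemma~\ref{polyder} this map is well defined on the quotient and is a derivation in $f$. The same holds in $g$ by symmetry, so \eqref{bi1} and \eqref{bi2} hold. Well-definedness in $g$ follows in the same way: $r(t^{n+1}q)'$ is divisible by $t^{n+1}$.

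\emph{Step 2: every biderivation is some $B_r$.} By Proposition~\ref{generators} with $S=\{t\}$, a biderivation $B$ is determined by $B(t,t)$. Since $B_r(t,t)=r$, the map $B_{B(t,t)}$ agrees with $B$ at $(t,t)$, hence $B=B_{B(t,t)}$. This gives existence and uniqueness, with an arbitrary value $r=B(e_1,e_1)\in P_n$. It follows that $\dim\Bider(\NF_n)=n$. Since $f'g'=g'f'$, every biderivation is symmetric, so $\bskew(\NF_n)=0$. This agrees with Proposition~\ref{sym}.

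\emph{Step 3: the explicit formula.} Set $r=\sum_k r_k\,t^k/k!$, which corresponds to $\sum r_ke_k$. Define $B_k$ as the biderivation with $r=t^k/k!$. For $f=t^i/i!$ and $g=t^j/j!$,
\[
B_k(e_i,e_j)=\frac{t^k}{k!}\cdot\frac{t^{i-1}}{(i-1)!}\cdot\frac{t^{j-1}}{(j-1)!}
=\frac{(i+j+k-2)!}{k!(i-1)!(j-1)!}\,e_{i+j+k-2},
\]
with the convention that the term is zero when $i+j+k-2>n$. This matches \eqref{Bk}. The truncation of the sum at $k\le n-i-j+2$ is exactly this vanishing condition.

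\emph{Main obstacle.} The only delicate point is the well-definedness of $B_r$ modulo $t^{n+1}$, which needs $r(0)=0$. This is already handled by the argument in Lemma~\ref{polyder}. Everything else is a direct reduction to that lemma and to Proposition~\ref{generators}.
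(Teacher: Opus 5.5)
Your proposal is correct and follows essentially the same route as the paper. You realize $\NF_n$ as $P_n$, verify that $B_r(f,g)=rf'g'$ is a biderivation via Lemma~\ref{polyder}, and identify an arbitrary $B$ with $B_{B(t,t)}$; the only cosmetic difference is that you apply Proposition~\ref{generators} directly with $S=\{t\}$, whereas the paper applies Lemma~\ref{polyder} twice (first to $B(\cdot,g)$, then to $B(t,\cdot)$), which amounts to the same argument.
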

\begin{proof}
In $P_n$, for $r\in P_n$ define $B_r(f,g)=rf'g'$. Changing either input by a multiple of $t^{n+1}$ changes its derivative by a multiple of $t^n$; multiplication by $r$, whose constant term is zero, restores divisibility by $t^{n+1}$. Changes in the representative of $r$ also have no effect. For fixed $g$, this is $D_{rg'}$, a derivation by Lemma~\ref{polyder}. Symmetry then makes it a biderivation.

Conversely, put $r=B(t,t)$. Lemma~\ref{polyder} gives $B(f,g)=B(t,g)f'$. Since $B(t,\cdot)$ is a derivation taking $t$ to $r$, $B(t,g)=rg'$. Hence $B(f,g)=rf'g'$. Taking $r=t^k/k!$ yields \eqref{Bk}. Evaluation at $(t,t)$ proves independence and uniqueness. All resulting maps are symmetric; Proposition~\ref{sym} gives the skew-symmetric assertion.
\end{proof}
\begin{remark}
The theorem includes $n=1,2$. Also $\Ann(\NF_n)=\spn{e_n}$ and $\NF_n/\NF_n^2=\spn{\bar e_1}$, so $\bann(\NF_n)=\spn{B_n}$ has dimension one.
\end{remark}
\begin{proposition}
If $n\ge3$, none of the maps $x\circ y$, $x\circ y-y\circ x$, and $x\circ y+y\circ x$ is a biderivation of $\NF_n$.
\end{proposition}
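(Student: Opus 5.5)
The plan is to use the classification in Theorem~\ref{null}. Each of the three maps is bilinear, so if it were a biderivation it would equal $B=\sum_k r_kB_k$ with $B(e_1,e_1)=\sum_k r_ke_k$. The coefficients are therefore forced by the value at $(e_1,e_1)$. I will read off that value and then exhibit one pair $(e_i,e_j)$ where the candidate map disagrees with the corresponding $\sum r_kB_k$.

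\emph{The product $x\circ y$.} Here $e_1\circ e_1=e_2$, so the only candidate is $r_2=1$, that is, $B_2$. By \eqref{Bk}, $B_2(e_i,e_j)=\frac{(i+j)!}{2(i-1)!(j-1)!}e_{i+j}$. Compare this with $e_i\circ e_j=\binom{i+j-1}{j}e_{i+j}$ at $(i,j)=(1,2)$, which is available because $n\ge3$. We get $B_2(e_1,e_2)=\frac{3!}{2\cdot1}e_3=3e_3$, while $e_1\circ e_2=e_3$. So the product is not a biderivation.

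\emph{The commutator $x\circ y-y\circ x$.} It vanishes at $(e_1,e_1)$, so the candidate is $B=0$. However, $e_1\circ e_2-e_2\circ e_1=e_3-\binom{2}{1}e_3=-e_3\neq0$ for $n\ge3$. This also follows more simply from $\bskew(\NF_n)=0$, because the map is skew-symmetric and nonzero.

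\emph{The symmetrized product $x\circ y+y\circ x$.} Its value at $(e_1,e_1)$ is $2e_2$, so the candidate is $2B_2$, which gives $6e_3$ at $(e_1,e_2)$. The map itself gives $e_3+2e_3=3e_3$. Since $6\neq3$ in characteristic zero and $e_3\ne0$ when $n\ge3$, this is not a biderivation either.

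A cleaner check uses the polynomial model. There $f\circ g+g\circ f=fg$, whereas every biderivation has the form $rf'g'$. At $f=g=t$ this forces $r=t^2$. Then at $(t,t^2)$ the candidate gives $2t^3$, while the map gives $t^3$.

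The only delicate step is the bookkeeping of the constants, namely the factor $1/k!$ in $B_k$ and the identification $e_i\leftrightarrow t^i/i!$. Evaluating at the pair $(e_1,e_2)$ avoids any further subtlety, and the hypothesis $n\ge3$ is exactly what ensures $e_3\neq0$.
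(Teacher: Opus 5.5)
Your proof is correct and takes essentially the paper's route: everything is reduced to Theorem~\ref{null}, and your anticommutator argument (forced candidate $2B_2$, compared at $(e_1,e_2)$, giving $6e_3$ versus $3e_3$) matches the paper's, as does your $\bskew(\NF_n)=0$ shortcut for the commutator. The one small difference is the product $x\circ y$: the paper notes that it is not symmetric ($e_1\circ e_2=e_3$, $e_2\circ e_1=2e_3$) whereas every biderivation of $\NF_n$ is, while you compare it with the forced candidate $B_2$ at $(e_1,e_2)$; both are immediate.
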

\begin{proof}
The product is not symmetric: $e_1\circ e_2=e_3$ and $e_2\circ e_1=2e_3$. Its commutator is a nonzero skew-symmetric map. Both contradict Theorem~\ref{null} if they are biderivations. The anticommutator takes $(e_1,e_1)$ to $2e_2$, so a biderivation with this value must be $2B_2$. At $(e_1,e_2)$ the anticommutator is $3e_3$, whereas $2B_2(e_1,e_2)=6e_3$.
\end{proof}
\section{The standard filiform families}
For $n\ge5$, every complex filiform Zinbiel algebra is isomorphic to exactly one of the following algebras \cite[Theorem~3.5]{AKO}:
\begin{align*}
F_n^1:&\quad e_i\circ e_j=\binom{i+j-1}{j}e_{i+j},\quad 2\le i+j\le n-1;\\
F_n^2:&\quad\text{the products of }F_n^1,\quad e_n\circ e_1=e_{n-1};\\
F_n^3:&\quad\text{the products of }F_n^1,\quad e_n\circ e_n=e_{n-1}.
\end{align*}
These tables define Zinbiel algebras for $n\ge3$, but the cited completeness statement concerns $n\ge5$. We first compute their biderivations directly from the tables.
\subsection{A reduction using the annihilator}
Put $m=n-1$, $N=\NF_m$, and $f=e_n$. For $\alpha,\beta\in\K$, write $F_{\alpha,\beta}=N\oplus\K f$ with
\begin{equation}\label{fab}(x+sf)\circ(y+tf)=x\circ_N y+(\alpha sy_1+\beta st)e_m.\end{equation}
Here $x=\sum_{i=1}^m x_ie_i$, and similarly for $y$. The three families correspond to $(\alpha,\beta)=(0,0),(1,0),(0,1)$. The correction is annihilator-valued and vanishes whenever either input is a product; thus direct substitution in \eqref{zin} shows that \eqref{fab} is Zinbiel. For $m\ge2$, its square is $\spn{e_2,\ldots,e_m}$ and its higher powers are $\spn{e_k,\ldots,e_m}$ for $2\le k\le m$. It is filiform and generated by $e_1,f$.

Let $B_k^N$ denote extension of \eqref{Bk} on $N$ by zero on pairs involving $f$. For $w_{11},w_{1f},w_{f1},w_{ff}\in\Ann(F_{\alpha,\beta})$, define
\begin{equation}\label{Phi}\Phi(x+sf,y+tf)=x_1y_1w_{11}+x_1tw_{1f}+sy_1w_{f1}+stw_{ff}.\end{equation}
We also write $\Phi=\Phi(w_{11},w_{1f},w_{f1},w_{ff})$.
\begin{lemma}\label{annf}
For $m\ge2$,
\[\Ann(F_{\alpha,\beta})=\{ce_m+tf:\alpha t=\beta t=0\}.\]
If $v$ satisfies $e_1\circ v=v\circ e_1=0$ and $v\circ f+f\circ v=0$, then $v$ belongs to this annihilator. If $D$ is a derivation with $D(e_1)=0$, then $D(f)$ belongs to it.
\end{lemma}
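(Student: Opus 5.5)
The plan is to compute the relevant products in $F_{\alpha,\beta}$ directly from \eqref{fab} and \eqref{nf}, and then to read off all three assertions from a few of them. Write $u=x+sf$ with $x=\sum_{i=1}^m x_ie_i$. The correction term in \eqref{fab} involves only the $f$-coefficient of the \emph{left} factor. From this we obtain:
\begin{itemize}
\item $e_1\circ u=e_1\circ_N x=\sum_{i=1}^{m-1}x_ie_{i+1}$, using $e_1\circ e_i=e_{i+1}$;
\item $u\circ e_1=x\circ_N e_1+\alpha s e_m$;
\item $u\circ f=\beta s e_m$;
\item $f\circ u=(\alpha x_1+\beta s)e_m$.
\end{itemize}

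\emph{Annihilator.} Since $m\ge2$, the first formula shows that $e_1\circ u=0$ forces $x_1=\dots=x_{m-1}=0$, so $x=ce_m$. For such $u$ the remaining products reduce as follows. First, $e_m\circ e_j$ and $e_j\circ e_m$ vanish in $N$ because the degree exceeds $m$. Second, $f\circ u=\beta s e_m$ and $u\circ f=\beta s e_m$. Third, $u\circ e_1=\alpha s e_m$. Finally, $u\circ e_j=0$ for $j\ge2$, since the correction term needs $y_1$. Hence $u\in\Ann(F_{\alpha,\beta})$ exactly when $\alpha s=\beta s=0$. Conversely, every element $ce_m+tf$ with $\alpha t=\beta t=0$ annihilates the spanning set $e_1,\dots,e_m,f$ on both sides by the same formulas. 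This proves the description of the annihilator.

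\emph{The criterion for $v$.} Let $v=x+sf$ satisfy the three hypotheses. As above, $e_1\circ v=0$ gives $x=ce_m$. Then $v\circ e_1=\alpha s e_m=0$ gives $\alpha s=0$. Next, $v\circ f+f\circ v=2\beta s e_m=0$ gives $\beta s=0$, using characteristic zero. By the first part, $v$ lies in the annihilator.

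\emph{The derivation claim.} Let $D$ be a derivation with $D(e_1)=0$. The kernel of $D$ is a subalgebra containing $e_1$. The subalgebra generated by $e_1$ contains all of $N$, because $e_{k+1}=e_1\circ e_k$; in particular $D(e_m)=0$. Now apply $D$ to three products:
\begin{itemize}
\item $e_1\circ f=0$ gives $e_1\circ D(f)=0$;
\item $f\circ e_1=\alpha e_m$ gives $D(f)\circ e_1=\alpha D(e_m)=0$;
\item $f\circ f=\beta e_m$ gives $D(f)\circ f+f\circ D(f)=\beta D(e_m)=0$.
\end{itemize}
Thus $v=D(f)$ satisfies the hypotheses of the second assertion and so lies in the annihilator.

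The only delicate point is the bookkeeping of the asymmetric correction term, namely that it depends on the left factor's $f$-coefficient and the right factor's $e_1$-coefficient. The assumption $m\ge2$ is needed so that $e_1\circ e_{m-1}=e_m$ is a genuine product, which is what forces $x_1=\dots=x_{m-1}=0$.
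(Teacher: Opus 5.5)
Your proposal is correct and follows essentially the same route as the paper. You use $e_1\circ v=0$ to force the $N$-part to be $ce_m$, read off $\alpha t=0$ and $2\beta t=0$ from the other two conditions, and for the derivation claim note that $D$ kills $N$ (generated by $e_1$) and apply $D$ to $e_1\circ f=0$, $f\circ e_1=\alpha e_m$, $f\circ f=\beta e_m$; the only difference is that you write out the two-sided annihilator check which the paper compresses into one sentence.
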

\begin{proof}
Write $v=y+tf$. Since $e_1\circ y=\sum_{i=1}^{m-1}y_ie_{i+1}$, its vanishing forces $y=ce_m$. The equation $v\circ e_1=0$ gives $\alpha t=0$, and $v\circ f+f\circ v=0$ gives $2\beta t=0$. These conditions imply that $v$ annihilates the whole algebra on both sides. If $D(e_1)=0$, then $D$ vanishes on $N$, which $e_1$ generates. Applying $D$ to $e_1\circ f=0$, $f\circ e_1=\alpha e_m$, and $f\circ f=\beta e_m$ gives the three equations for $v=D(f)$.
\end{proof}
\begin{lemma}\label{extend}
Each map \eqref{Phi} is a biderivation. Also $B_k^N$ is a biderivation for $k\ge2$, and for every $k$ if $\alpha=\beta=0$.
\end{lemma}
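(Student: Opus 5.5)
For the maps \eqref{Phi} I will use Theorem~\ref{ann}(1). By construction $\Phi$ takes values in $\Ann(F_{\alpha,\beta})$, since all the $w$'s lie there. So it suffices to check that $\Phi$ vanishes whenever one argument lies in $F_{\alpha,\beta}^2=\spn{e_2,\ldots,e_m}$. An element of $F_{\alpha,\beta}^2$ has $x_1=0$ and $s=0$, and every term of \eqref{Phi} contains a factor $x_1$ or $s$ (respectively $y_1$ or $t$). Hence $\Phi(F^2,\cdot)=\Phi(\cdot,F^2)=0$, and $\Phi$ is a biderivation.

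For $B_k^N$, symmetry and Proposition~\ref{sym} reduce the task to verifying \eqref{bi1} for all triples of basis elements from $\{e_1,\ldots,e_m,f\}$. I split the triples $(x,y,z)$ according to where $f$ occurs.

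\emph{No $f$ occurs.} Then $x\circ y=x\circ_N y$, and \eqref{bi1} holds by Theorem~\ref{null} applied to $N=\NF_m$. One caveat: the right-hand side is computed in $F_{\alpha,\beta}$, but all its factors lie in $N$. The product of two elements of $N$ in $F_{\alpha,\beta}$ agrees with $\circ_N$, because the correction term in \eqref{fab} requires $s\neq0$. So there is no discrepancy.

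\emph{$z=f$.} Both sides vanish, since $B_k^N(\cdot,f)=0$.

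\emph{$x=f$ or $y=f$, with $z\in N$.} This is the main step. The left-hand side is $B_k^N(x\circ y,z)$, where $x\circ y$ is either $0$ or a multiple of $e_m$. Specifically, $f\circ e_1=\alpha e_m$, $f\circ f=\beta e_m$, and every other product involving $f$ is zero. The right-hand side is $B_k^N(x,z)\circ y+x\circ B_k^N(y,z)$. A term with an $f$ in the first slot of $B_k^N$ vanishes. The remaining terms have the form $x\circ B_k^N(y,z)$ with $x=f$, or $B_k^N(x,z)\circ f$, and these are nonzero only through $f\circ e_1$; note $e_j\circ f=0$ for $j\le m$. Thus everything reduces to two facts:
\begin{enumerate}
\item $B_k^N(e_m,z)=0$ for all $z$, which holds because the index $m+j+k-2$ exceeds $m$ unless $j=1$ and $k=1$;
\item the coefficient of $e_1$ in $B_k^N(y,z)$ vanishes, which holds because the index $i+j+k-2\ge1$, with equality only when $i=j=k=1$.
\end{enumerate}
Both facts hold for $k\ge2$, so every term vanishes.

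When $k=1$, the two facts fail exactly at $B_1(e_1,e_1)=e_1$ and $B_1(e_m,e_1)=\binom{m}{1}\cdots e_m$. For example, at $(x,y,z)=(f,e_1,e_1)$ the left-hand side is $\alpha B_1(e_m,e_1)$, which is a nonzero multiple of $\alpha e_m$, while the right-hand side is $f\circ e_1=\alpha e_m$ with a different coefficient. These failures are all proportional to $\alpha$ or $\beta$, so they disappear when $\alpha=\beta=0$. In that case every product involving $f$ is zero, and the mixed case is trivial.

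I expect the main obstacle to be the bookkeeping in this mixed case. The plan is to enumerate the few nonzero products involving $f$ explicitly and observe that, for $k\ge2$, the value $B_k^N$ never produces $e_1$ and always kills $e_m$.
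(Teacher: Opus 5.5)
Your proof is correct and follows essentially the paper's argument. For the maps \eqref{Phi} you use Theorem~\ref{ann}(1); for $B_k^N$ you use the two facts that $B_k(e_m,\cdot)=0$ and that every value has zero $e_1$-coordinate when $k\ge2$, with symmetry supplying \eqref{bi2} and the case $\alpha=\beta=0$ reducing to extension by zero. Your explicit case split by the position of $f$ spells out what the paper compresses into two sentences, and your aside on $k=1$ is accurate, with $B_1(e_m,e_1)=me_m$ exactly.
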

\begin{proof}
The first assertion follows from Theorem~\ref{ann}, since the classes of $e_1,f$ form a basis of $F_{\alpha,\beta}/F_{\alpha,\beta}^2$. For $k\ge2$, \eqref{Bk} gives $B_k(e_m,z)=0$ and shows that every value has zero $e_1$-coordinate. In \eqref{bi1}, the additional product on the left is therefore killed by $B_k^N$, and the additional term on the right is also zero. The remaining equality is the identity on $N$. Symmetry gives \eqref{bi2}. For $\alpha=\beta=0$, extension by zero is immediate for every $k$.
\end{proof}
\begin{proposition}\label{reduce}
If $B\in\Bider(F_{\alpha,\beta})$ and $B(e_1,e_1)=0$, then
\[B=\Phi(0,u,v,w),\qquad u,v,w\in\Ann(F_{\alpha,\beta}).\]
\end{proposition}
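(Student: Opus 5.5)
The plan is to compare $B$ with $\Phi(0,u,v,w)$ on the generating set $S=\{e_1,f\}$ and then use Proposition~\ref{generators}. Put
\[u=B(e_1,f),\qquad v=B(f,e_1),\qquad w=B(f,f).\]
Once we know $u,v,w\in\Ann(F_{\alpha,\beta})$, the map $\Phi(0,u,v,w)$ is a biderivation by Lemma~\ref{extend}. It agrees with $B$ on $S\times S$, because $B(e_1,e_1)=0$ matches the vanishing $w_{11}$-slot. Since $e_1,f$ generate $F_{\alpha,\beta}$, Proposition~\ref{generators} then gives $B=\Phi(0,u,v,w)$. The whole proof therefore reduces to the three membership statements.

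For $u$ and $v$ I would apply Lemma~\ref{annf} directly.
\begin{itemize}
\item The map $D=B(e_1,\cdot)$ is a derivation by \eqref{bi2}, and $D(e_1)=B(e_1,e_1)=0$. Lemma~\ref{annf} gives $u=D(f)\in\Ann(F_{\alpha,\beta})$.
\item The map $D^\ast=B(\cdot,e_1)$ is a derivation by \eqref{bi1}, and $D^\ast(e_1)=0$. The same lemma gives $v=D^\ast(f)\in\Ann(F_{\alpha,\beta})$.
\end{itemize}

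The remaining and main step is $w\in\Ann(F_{\alpha,\beta})$, because $B(f,\cdot)$ does not vanish at $e_1$. Let $D'=B(f,\cdot)$, a derivation with $D'(e_1)=v\in\Ann$ and $D'(f)=w$. We first show that $D'$ kills $F_{\alpha,\beta}^2=\spn{e_2,\ldots,e_m}$.
\begin{itemize}
\item Since $e_1\circ e_1=e_2$ and $v$ is in the annihilator, $D'(e_2)=v\circ e_1+e_1\circ v=0$.
\item Inductively, from $e_{i+1}=e_1\circ e_i$ we get $D'(e_{i+1})=v\circ e_i+e_1\circ D'(e_i)=0$.
\end{itemize}
In particular $D'(e_m)=0$, using $m\ge2$.

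Next apply $D'$ to the three products $e_1\circ f=0$, $f\circ e_1=\alpha e_m$ and $f\circ f=\beta e_m$. The right-hand sides are all killed by $D'$. Using $v\in\Ann$, the resulting equations are
\[e_1\circ w=0,\qquad w\circ e_1=0,\qquad w\circ f+f\circ w=0.\]
The second part of Lemma~\ref{annf} now gives $w\in\Ann(F_{\alpha,\beta})$. This completes the reduction, and the comparison of the first paragraph finishes the proof.
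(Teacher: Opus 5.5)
Your proof is correct and follows the paper's route: place $u,v$ in the annihilator via Lemma~\ref{annf}, then $w$, and conclude with Proposition~\ref{generators}. The only difference is the $w$-step. The paper first subtracts the biderivation $\Phi(0,u,v,0)$, so that the derivation in the second argument at $f$ kills $e_1$ and the last clause of Lemma~\ref{annf} applies verbatim. You instead keep $B(f,\cdot)$ and redo that clause's argument by hand, using $v\in\Ann(F_{\alpha,\beta})$ to show it kills $e_2,\ldots,e_m$ and to drop the extra terms.
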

\begin{proof}
Apply Lemma~\ref{annf} to $B(e_1,\cdot)$ and $B(\cdot,e_1)$ to obtain $u=B(e_1,f)$ and $v=B(f,e_1)$ in the annihilator. Subtract $\Phi(0,u,v,0)$. The difference vanishes at $(f,e_1)$; applying the same lemma to its derivation in the second argument with first argument $f$ yields $w=B(f,f)\in\Ann(F_{\alpha,\beta})$. The displayed map agrees with $B$ on all generator pairs. Proposition~\ref{generators} completes the proof.
\end{proof}
\subsection{The split family}
Here $F_n^1=N\oplus\K f$ and $\Ann(F_n^1)=\spn{e_m,f}$.
\begin{theorem}\label{f1}
For $n\ge3$, all biderivations of $F_n^1$ are uniquely given by
\[B=\sum_{k=1}^m r_kB_k^N+\Phi(\rho f,u_1e_m+u_2f,v_1e_m+v_2f,w_1e_m+w_2f).\]
The parameters are arbitrary. The map is symmetric exactly when $(u_1,u_2)=(v_1,v_2)$, and skew-symmetric exactly when $r_k=\rho=w_1=w_2=0$ and $(v_1,v_2)=-(u_1,u_2)$. Hence
\[(\dim\Bider(F_n^1),\dim\bsym(F_n^1),\dim\bskew(F_n^1))=(n+6,n+4,2).\]
A basis of the skew-symmetric part is $E_{1,n}^{e_m}-E_{n,1}^{e_m}$, $E_{1,n}^{f}-E_{n,1}^{f}$.
\end{theorem}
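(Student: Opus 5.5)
The plan is to use the structure $F_n^1=F_{0,0}=N\oplus\K f$ with $N=\NF_m$, $m=n-1\ge2$. Here $\Ann(F_n^1)=\spn{e_m,f}$ by Lemma~\ref{annf}. By Lemma~\ref{extend}, every $B_k^N$ with $1\le k\le m$ and every $\Phi(w_{11},w_{1f},w_{f1},w_{ff})$ with annihilator values is a biderivation. So the displayed family consists of biderivations, and only completeness, uniqueness and the symmetry statements remain.

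\textbf{Completeness.} Take $B\in\Bider(F_n^1)$. Since $e_1,f$ generate $F_n^1$, Proposition~\ref{preserve} does not immediately pin down $B(e_1,e_1)$, so I will write it as $B(e_1,e_1)=\sum_{k=1}^m r_ke_k+\rho f$, which is an arbitrary element. The $N$-part is matched by $\sum_k r_kB_k^N$, since $B_k^N(e_1,e_1)=e_k$ by \eqref{Bk}. The $f$-part is matched by $\Phi(\rho f,0,0,0)$, which takes $(e_1,e_1)$ to $\rho f$. Subtracting both gives a biderivation vanishing at $(e_1,e_1)$. Proposition~\ref{reduce} then writes this difference as $\Phi(0,u,v,w)$ with $u,v,w\in\spn{e_m,f}$. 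Combining everything yields the stated formula.

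\textbf{Uniqueness.} Evaluate at the generator pairs. At $(e_1,e_1)$ we recover the $r_k$ and $\rho$, because $\Phi(0,u,v,w)$ vanishes there. At $(e_1,f)$, $(f,e_1)$ and $(f,f)$ the $B_k^N$ vanish, so these values recover $u$, $v$ and $w$. This shows the parameters $r_1,\dots,r_m,\rho,u_1,u_2,v_1,v_2,w_1,w_2$ are independent, giving $m+7=n+6$.

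\textbf{Symmetry.} The maps $B_k^N$ and the terms with $w_{11}$ and $w_{ff}$ are symmetric. The transpose swaps the $u$ and $v$ terms. So $B^\tau=B$ if and only if $u=v$, and $B^\tau=-B$ if and only if the symmetric coefficients vanish and $v=-u$; here I use the uniqueness of the parameters just established. This gives $\dim\bsym=n+4$ and $\dim\bskew=2$, with the listed basis $\Phi(0,e_m,-e_m,0)=E_{1,n}^{e_m}-E_{n,1}^{e_m}$ and similarly with $f$. The only point needing care is the case $n=3$, $m=2$, where Lemma~\ref{extend} and Proposition~\ref{reduce} must still apply; both require only $m\ge2$, so no obstacle arises. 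The main work is really just checking that $B(e_1,e_1)$ is unconstrained, which the construction shows directly.
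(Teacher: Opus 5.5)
Your proposal is correct and follows the paper's own proof. Sufficiency comes from Lemma~\ref{extend}; subtracting $\sum_k r_kB_k^N+\Phi(\rho f,0,0,0)$ and applying Proposition~\ref{reduce} gives completeness; evaluating at the four generator pairs gives uniqueness, the symmetry conditions and the dimension count. You also make explicit something the paper leaves implicit: the case $n=3$ only needs $m\ge2$.
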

\begin{proof}
All displayed maps are biderivations by Lemma~\ref{extend}. For arbitrary $B$, write $B(e_1,e_1)=\sum_{k=1}^m r_ke_k+\rho f$ and subtract $\sum_{k=1}^m r_kB_k^N+\Phi(\rho f,0,0,0)$. Proposition~\ref{reduce} gives the remaining terms. The four generator pairs recover every parameter uniquely. Comparing their values with the transposes gives the symmetry conditions, and counting parameters gives the dimensions.
\end{proof}
\subsection{The family with \texorpdfstring{$f\circ e_1=e_m$}{f o e1 = em}}
For $F_n^2$, assume $n\ge4$, so $m\ge3$. Define symmetric maps
\begin{align}
H(x+sf,y+tf)&=B_1(x,y)+(m-1)\big((x_2t+sy_2)e_m+(x_1t+sy_1)f\big),\label{H}\\
R(x+sf,y+tf)&=(x_2y_1+x_1y_2)e_m+x_1y_1f.\label{R}
\end{align}
\begin{theorem}\label{f2}
Every biderivation of $F_n^2$, $n\ge4$, has a unique expression
\[B=r_1H+\sum_{k=2}^m r_kB_k^N+\rho R+aE_{1,n}^{e_m}+bE_{n,1}^{e_m}+cE_{n,n}^{e_m}.\]
It is symmetric exactly when $a=b$. The skew-symmetric part is spanned by $E_{1,n}^{e_m}-E_{n,1}^{e_m}$, and the dimension triple is $(n+3,n+2,1)$.
\end{theorem}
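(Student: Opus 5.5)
We follow the pattern of the proof of Theorem~\ref{f1}: show that every displayed map is a biderivation, subtract the part determined by $B(e_1,e_1)$, and apply Proposition~\ref{reduce}. For $F_n^2$ we have $(\alpha,\beta)=(1,0)$. By Lemma~\ref{annf}, $\Ann(F_n^2)=\spn{e_m}$, since $\alpha t=0$ forces $t=0$.

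\textbf{Membership.} For $k\ge2$, the maps $B_k^N$ are biderivations by Lemma~\ref{extend}. The maps $E_{1,n}^{e_m}$, $E_{n,1}^{e_m}$ and $E_{n,n}^{e_m}$ are of the form $\Phi$ with annihilator values, so they are biderivations by Lemma~\ref{extend} (equivalently, Theorem~\ref{ann}). Since $H$ and $R$ are symmetric, Proposition~\ref{sym} reduces each of them to checking \eqref{bi1}.

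\begin{itemize}
\item For $R$ we check \eqref{bi1} directly. Its values are $R(x,y)\in\spn{e_m,f}$. Moreover $(e_m+\lambda f)\circ y=\lambda y_1e_m$ and $x\circ(e_m+\lambda f)=0$. Only the first two coordinates of a product matter, and products have zero $e_1$-coordinate, with $(x\circ y)_2=x_1y_1$. The identity then reduces to a short coordinate comparison.
\item For $H$ we view $B_1$ on $N$ as $(f,g)\mapsto tf'g'$. The correction term is needed because $B_1^N$ alone fails \eqref{bi1} when the new product $f\circ e_1=e_m$ is involved. We check \eqref{bi1} on triples of basis vectors containing $f$, plus the triples in $N$ whose product reaches $e_m$. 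The coefficient $m-1$ should come out of $B_1(e_1,e_{m-1})=(m-1)e_m$, which is the $k=1$ case of \eqref{Bk}.
\end{itemize}

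\textbf{Exhaustion.} Write $B(e_1,e_1)=\sum_k r_ke_k+\rho f$. Here $H(e_1,e_1)=e_1$, $R(e_1,e_1)=f$, and $B_k^N(e_1,e_1)=e_k$. Hence $B-r_1H-\sum_{k\ge2}r_kB_k^N-\rho R$ vanishes at $(e_1,e_1)$. Proposition~\ref{reduce} then expresses this difference as $\Phi(0,u,v,w)$ with $u,v,w\in\spn{e_m}$, which gives the parameters $a$, $b$, $c$.

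\textbf{Uniqueness.} Evaluate at the four generator pairs $(e_1,e_1)$, $(e_1,f)$, $(f,e_1)$, $(f,f)$. Here $H(e_1,f)=H(f,e_1)=(m-1)f$ and $R(e_1,f)=0$. The $e_k$- and $f$-coordinates of $B(e_1,e_1)$ give $r_k$ and $\rho$. The $e_m$-coordinates of the remaining pairs give $a$, $b$ and $c$, with the $f$-coordinate accounted for by $r_1$; no value of $H$ or $R$ at a pair involving $f$ has an $e_m$-component conflicting with this. So the $n+3$ maps are independent.

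\textbf{Symmetry.} Every term is symmetric except $aE_{1,n}^{e_m}+bE_{n,1}^{e_m}$, so $B$ is symmetric exactly when $a=b$. Skew-symmetry forces all symmetric coefficients to vanish and $b=-a$. This gives the dimension triple $(n+3,n+2,1)$.

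The main obstacle is the verification that $H$ satisfies \eqref{bi1}. The critical triples are $(f,e_1,e_j)$, $(e_1,f,\cdot)$ and $(e_i,e_j,f)$, where the $e_m$-terms produced by the $(m-1)$ coefficient must cancel exactly against $B_1$ landing in $e_m$. I would check these triples one at a time, using $m\ge3$ so that $e_2\ne e_m$ cannot occur in a way that mixes the correction coordinates.
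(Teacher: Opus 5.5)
Your skeleton is the paper's own. You use $\Ann(F_n^2)=\K e_m$, and Lemma~\ref{extend} for $B_k^N$ ($k\ge2$) and the three $E$-maps. You subtract $r_1H+\sum_{k\ge2}r_kB_k^N+\rho R$ and then apply Proposition~\ref{reduce}. The generator values $B(e_1,f)=ae_m+(m-1)r_1f$, $B(f,e_1)=be_m+(m-1)r_1f$, $B(f,f)=ce_m$ give uniqueness, symmetry and the count $(n+3,n+2,1)$. Your check of $R$ is also essentially complete, since both sides of \eqref{bi1} reduce to $x_1y_1z_1e_m$.

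\textbf{The gap.} What is missing is the one genuinely new verification: that $H$ satisfies \eqref{bi1}. You flag it as the main obstacle but do not carry it out. The exhaustion step through Proposition~\ref{reduce} is valid only once $H$ is known to be a biderivation, so everything downstream depends on it.

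\textbf{A wrong heuristic.} The explanation you give for the constant is incorrect. By \eqref{Bk}, $B_1(e_1,e_{m-1})=(m-1)e_{m-1}$, not $(m-1)e_m$. The coefficient is really a weight. On $N$, $H(\cdot,e_1)$ is $e_i\mapsto B_1(e_i,e_1)=ie_i$. Compatibility with $f\circ e_1=e_m$ then forces $H(f,e_1)=(m-1)f$.

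\textbf{How to close it.} Follow the paper. Take $X=x+sf$, $Y=y+tf$, $Z=z+pf$ and use the following facts:
\begin{itemize}
\item $(u+\sigma f)\circ(y+tf)=u\circ_N y+\sigma y_1e_m$, and $e_m$ annihilates $N$ on both sides;
\item $B_1(e_m,z)=mz_1e_m$ and $B_1(y,z)_1=y_1z_1$;
\item $X\circ Y=x\circ_N y+sy_1e_m$ has $e_1$-coordinate $0$ and $e_2$-coordinate $x_1y_1$. This is where $m\ge3$ enters.
\end{itemize}
Subtract the identity for $B_1$ on $N$. The residual $e_m$-coefficients are $msy_1z_1+(m-1)x_1y_1p$ on the left and $(m-1)(x_1p+sz_1)y_1+sy_1z_1$ on the right. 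These agree, and no $f$-components remain.

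In your basis-triple language the picture is as follows.
\begin{itemize}
\item Triples inside $N$ are exactly the identity for $B_1$ on $N$, because $H=B_1$ on $N\times N$ and products in $N$ are unchanged. So the triples ``whose product reaches $e_m$'' need no separate check.
\item The triple $(e_1,f,\cdot)$ is not critical: it gives $0=0$, as does every triple containing $f$ other than the two below.
\item The only triples where the correction term matters are $(f,e_1,e_1)$, where both sides equal $me_m$, and $(e_1,e_1,f)$, where both sides equal $(m-1)e_m$.
\end{itemize}
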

\begin{proof}
We first verify $H,R$. Both are symmetric, so it suffices to check \eqref{bi1}. Use $x+sf,y+tf,z+pf$ and subtract the identity for $B_1$ on $N$. For $H$, the residual $e_m$-coefficients are
\begin{align*}
\text{left: }&msy_1z_1+(m-1)x_1y_1p,\\
\text{right: }&(m-1)(x_1p+sz_1)y_1+sy_1z_1.
\end{align*}
They agree; the residual $f$-components are zero. Here $m\ge3$ makes the first two coordinates of $e_m$ zero. For $R$, both sides reduce to $x_1y_1z_1e_m$. The remaining maps are biderivations by Lemma~\ref{extend}, since $\Ann(F_n^2)=\K e_m$.

For necessity, write $B(e_1,e_1)=\sum_{k=1}^m r_ke_k+\rho f$ and subtract $r_1H+\sum_{k=2}^m r_kB_k^N+\rho R$. The difference vanishes at $(e_1,e_1)$, so Proposition~\ref{reduce} gives the last three terms. The generator values are
\begin{align*}
B(e_1,e_1)&=\sum_{k=1}^m r_ke_k+\rho f,\\
B(e_1,f)&=ae_m+(m-1)r_1f,\\
B(f,e_1)&=be_m+(m-1)r_1f,\qquad B(f,f)=ce_m.
\end{align*}
They prove uniqueness, the symmetry assertions, and the dimensions.
\end{proof}
\subsection{The family with \texorpdfstring{$f\circ f=e_m$}{f o f = em}}
\begin{theorem}\label{f3}
For $n\ge4$, every biderivation of $F_n^3$ is uniquely given by
\[B=\sum_{k=2}^m r_kB_k^N+aE_{1,n}^{e_m}+bE_{n,1}^{e_m}+cE_{n,n}^{e_m}.\]
It is symmetric exactly when $a=b$. Its skew-symmetric part is spanned by $E_{1,n}^{e_m}-E_{n,1}^{e_m}$, and its dimension triple is $(n+1,n,1)$.
\end{theorem}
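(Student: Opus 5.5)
Following the pattern of Theorem~\ref{f2}, I split the argument into sufficiency and necessity. Here $(\alpha,\beta)=(0,1)$ and $m=n-1\ge3$. By Lemma~\ref{annf}, $\Ann(F_n^3)=\K e_m$.

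\emph{Sufficiency.} Lemma~\ref{extend} shows that $B_k^N$ is a biderivation for every $k\ge2$. The maps $E_{1,n}^{e_m}$, $E_{n,1}^{e_m}$ and $E_{n,n}^{e_m}$ are exactly $\Phi(0,e_m,0,0)$, $\Phi(0,0,e_m,0)$ and $\Phi(0,0,0,e_m)$. Each of these is a biderivation by Lemma~\ref{extend}, since $e_m$ spans the annihilator. So every displayed map is a biderivation.

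\emph{Necessity.} Let $B$ be a biderivation and write $B(e_1,e_1)=\sum_{k=1}^m r_ke_k+\rho f$. The key step is to show $r_1=\rho=0$, which is where $F_n^3$ differs from $F_n^2$: there is no analogue of $H$ or $R$.

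\begin{itemize}
\item[(i)] Set $D=B(\cdot,e_1)$, a derivation. Apply it to $f\circ f=e_m$ to get
\[D(e_m)=D(f)\circ f+f\circ D(f).\]
Also apply it to $e_1\circ f=0$ and $f\circ e_1=0$. Writing $D(f)=y+tf$, these give
\[D(e_1)\circ f+e_1\circ y=0,\qquad y\circ e_1+f\circ D(e_1)=0.\]
\item[(ii)] Because $\alpha=0$, we have $f\circ D(e_1)=\rho\,e_m$ and $D(e_1)\circ f=\rho\,e_m$ (using $f\circ f=e_m$). Hence
\[e_1\circ y=-\rho e_m,\qquad y\circ e_1=-\rho e_m.\]
The first equation forces $y=ce_m-\rho e_{m-1}$.
\item[(iii)] Then $y\circ e_1=-\rho(m-1)e_m$ by \eqref{nf}, since $e_{m-1}\circ e_1=\binom{m-1}{1}e_m$. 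Comparing with $-\rho e_m$ gives $\rho(m-2)=0$, so $\rho=0$ because $m\ge3$.
\item[(iv)] Now $D$ restricted to $N$ is a derivation of $N$ sending $e_1$ to $\sum r_ke_k$. By Lemma~\ref{polyder}, $D(e_m)=r_1 m\,e_m$ plus terms beyond $e_m$; via $f\mapsto pf'$ with $p=\sum r_kt^k/k!$ and $e_m=t^m/m!$, the leading part is $r_1 m e_m$.
\item[(v)] On the other hand, $D(f)\circ f+f\circ D(f)=2t\,e_m$, since $y\circ f=f\circ y=0$ for $y\in N$. Applying the same analysis to $e_1\circ D(f)$ shows that $D(f)$ cannot carry an $f$-component unless it is compatible with $D(e_1)$. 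Expected outcome: $t=0$, so $D(e_m)=0$, which yields $m r_1=0$ and thus $r_1=0$.
\end{itemize}

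The main obstacle is step (v), pinning down $t$ and deducing $r_1=0$ cleanly. My first attempt would be to use the relation $f\circ f=e_m$ both in the first argument (through \eqref{bi1}) and in the second (through \eqref{bi2}), computing $B(e_m,e_1)$ in two ways: once through $B(f\circ f,e_1)$ and once through the $N$-structure as $B(e_1\circ e_{m-1},e_1)$. The first route gives $2t\,e_m$, while the $N$-side gives the component $m r_1e_m$ (with the index shift from $B_1$). I expect to have to combine this with the $e_1$-equation (from $e_1\circ f=0$, \eqref{bi1}) to force $t=0$ and hence $r_1=0$. If this stalls, I would compute the relevant coefficients directly in the polynomial model $P_m$.

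\emph{Conclusion.} With $r_1=\rho=0$, subtract $\sum_{k\ge2}r_kB_k^N$. The difference vanishes at $(e_1,e_1)$, so Proposition~\ref{reduce} turns it into $\Phi(0,ae_m,be_m,ce_m)$, using $\Ann=\K e_m$. Uniqueness follows by evaluating at the four generator pairs. The symmetry condition $a=b$ follows from comparing $B$ with $B^\tau$, noting that $B_k^N$ and $E_{n,n}^{e_m}$ are symmetric. Counting parameters gives $\dim\Bider=(m-1)+3=n+1$, $\dim\bsym=n$ and $\dim\bskew=1$.
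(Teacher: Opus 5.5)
\textbf{There is a genuine gap at step (v): you never prove $t=0$, so $r_1=0$ is not established.} The rest of your argument is correct: sufficiency, the proof that $\rho=0$ in (i)--(iii), and the closing reduction via Proposition~\ref{reduce} with the count $(m-1)+3=n+1$. One small point: strictly, (iv) also needs $D(N)\subseteq N$. This follows by induction from $D(e_{i+1})=D(e_1)\circ e_i+e_1\circ D(e_i)$ together with $e_1\circ f=0$.

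Steps (iv)--(v) establish only $2t=mr_1$. Nothing further about $t$ can come from the fact that $D=B(\cdot,e_1)$ is a derivation. The grading map $G(e_i)=ie_i$, $G(f)=\frac{m}{2}f$ is a derivation of $F_n^3$, and $D=r_1G$ satisfies every such constraint with $t=\frac{m}{2}r_1\neq 0$. So the routes you suggest would stall:
\begin{itemize}
\item Computing $B(e_m,e_1)$ in two ways just reproduces $2t=mr_1$.
\item The relation from $e_1\circ f=0$ in \eqref{bi1} with third argument $e_1$ contains no $t$; it is your equation $e_1\circ y=-\rho e_m$.
\item The model $P_m$ sees only $N$, not $f$.
\end{itemize}

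The missing idea is to use the derivation property in the \emph{other} argument. Your computation in (i)--(iii) never used that the derivation was $B(\cdot,e_1)$. It shows that every derivation $D'$ of $F_n^3$ has $D'(e_1)$ free of $f$-component: if $D'(e_1)=p+\gamma f$, then $e_1\circ D'(f)=-\gamma e_m=D'(f)\circ e_1$, which forces $(m-2)\gamma=0$. Apply this to $D'=B(f,\cdot)$, which is a derivation by \eqref{bi2}. Its value at $e_1$ is $B(f,e_1)=y+tf$, so $t=0$, and then $mr_1=2t=0$.

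With this line added your proof is complete, and it runs parallel to the paper's. The paper proves the same general observation and then obtains $t=\frac{m}{2}r_1$ differently: it notes that $C(\cdot,e_1)-r_1G$ kills $e_1$ and invokes Lemma~\ref{annf}. You get the same relation from $f\circ f=e_m$ together with Lemma~\ref{polyder}. Both arguments then finish by applying the observation to $B(f,\cdot)$.
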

\begin{proof}
Sufficiency follows from Lemma~\ref{extend}. Every derivation $D$ of $F_n^3$ has no $f$-component in $D(e_1)$. Indeed, write $D(e_1)=p+\gamma f$ and $D(f)=y+tf$, with $p,y\in N$. Differentiating $e_1\circ f=f\circ e_1=0$ gives
\[e_1\circ y=-\gamma e_m,\qquad y\circ e_1=-\gamma e_m.\]
The first forces $y=-\gamma e_{m-1}+y_me_m$, and the second gives $(2-m)\gamma=0$. Since $m\ge3$, $\gamma=0$.

The grading map $G(e_i)=ie_i$ for $1\le i\le m$, $G(f)=\frac m2f$ is a derivation, also respecting $f\circ f=e_m$. Write $B(e_1,e_1)=\sum_{k=1}^m r_ke_k+\gamma f$. Applying the previous observation to $B(\cdot,e_1)$ gives $\gamma=0$. Subtract $\sum_{k=2}^m r_kB_k^N$, obtaining $C(e_1,e_1)=r_1e_1$. The derivation $C(\cdot,e_1)-r_1G$ kills $e_1$, so Lemma~\ref{annf} implies that the $f$-component of $C(f,e_1)$ equals $mr_1/2$. But $C(f,\cdot)$ is a derivation whose value at $e_1$ has zero $f$-component. Thus $r_1=0$. Proposition~\ref{reduce} applies. The generator values give uniqueness and the symmetry claims.
\end{proof}

\section{Filiform algebras in dimension three}
Every three-dimensional filiform algebra has $\dim A^2=1$ and $A^3=0$, so $A^2\subseteq\Ann(A)$. Choose $A=V\oplus\K c$, where $\dim V=2$ and $A^2=\K c$. Its product is
\begin{equation}\label{form}(x+sc)\circ(y+tc)=\varphi(x,y)c,\end{equation}
where $\varphi$ is a nonzero bilinear form. Conversely, every such product is Zinbiel and filiform. Denote this algebra by $A_\varphi$.
\subsection{Classification by bilinear forms}
\begin{theorem}\label{class3}
Every three-dimensional complex filiform Zinbiel algebra is isomorphic to one of
\begin{align*}
F_3^1:&\quad e_1\circ e_1=e_2;\\
A_3(\mu):&\quad e_1\circ e_3=e_2,\quad e_3\circ e_1=\mu e_2,\quad\mu\in\C;\\
G_3:&\quad e_1\circ e_1=e_2,\quad e_1\circ e_3=e_2,\quad e_3\circ e_1=-e_2.
\end{align*}
The first and third are distinct from every member of the middle family, and
\[A_3(\mu)\cong A_3(\mu')\quad\Longleftrightarrow\quad\mu'=\mu\ \text{or}\ \mu\mu'=1.\]
Moreover, $F_3^2\cong A_3(0)$ and $F_3^3\cong A_3(1)$.
\end{theorem}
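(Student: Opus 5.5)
The plan is to reduce everything to the classification of nonzero bilinear forms on a two-dimensional space $V$, up to congruence and scaling. Isomorphisms between the algebras $A_\varphi$ of \eqref{form} should correspond exactly to that equivalence of forms.

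\textbf{Step 1: reduction to forms.} An isomorphism $\theta:A_\varphi\to A_\psi$ maps $A_\varphi^2=\K c$ onto $A_\psi^2=\K c$, say $\theta(c)=\lambda c$ with $\lambda\neq0$. It therefore induces $g\in GL(V)\cong GL(A/A^2)$. Since products depend only on classes modulo $c$, the relation $\theta(x)\circ\theta(y)=\theta(x\circ y)$ becomes
\[\psi(gx,gy)=\lambda\varphi(x,y).\]
Conversely, any such pair $(g,\lambda)$, extended by $c\mapsto\lambda c$, is an isomorphism. So $A_\varphi\cong A_\psi$ if and only if $\psi$ is congruent to a nonzero multiple of $\varphi$.

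\textbf{Step 2: normal forms.} Write $\varphi=\sigma+\omega$ with $\sigma$ symmetric and $\omega$ alternating, and split by $\rank\sigma\in\{0,1,2\}$.
\begin{itemize}
\item If $\sigma=0$, then $\omega\neq0$. Choosing a basis with $\omega(e_1,e_3)=1$ gives $A_3(-1)$.
\item If $\rank\sigma=1$, diagonalize over $\C$ so that $\sigma(e_1,e_1)=1$ and $\sigma$ vanishes otherwise. If $\omega=0$ this is $F_3^1$. Otherwise rescaling $e_3$ makes $\omega(e_1,e_3)=1$, which gives $G_3$.
\item If $\rank\sigma=2$, take an isotropic basis $e_1,e_3$ for $\sigma$. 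Then $\varphi(e_1,e_1)=\varphi(e_3,e_3)=0$, $\varphi(e_1,e_3)=s+a$ and $\varphi(e_3,e_1)=s-a$, with $s\neq0$. Swapping $e_1,e_3$ if necessary, we may assume $s+a\neq0$. Rescaling then yields $A_3(\mu)$ with $\mu=(s-a)/(s+a)\neq-1$.
\end{itemize}
Hence every algebra is on the list. The identifications with $F_3^2$ and $F_3^3$ are read off from their tables with $f=e_3$.
\begin{itemize}
\item For $F_3^2$ the matrix is $\begin{pmatrix}1&0\\1&0\end{pmatrix}$. Its symmetric part has rank $2$ and the form itself is degenerate, which places it at $\mu=0$.
\item For $F_3^3$ the form is symmetric and nondegenerate, so $\mu=1$.
\end{itemize}

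\textbf{Step 3: distinguishing invariants.} The invariants must be unchanged under congruence and scaling.
\begin{itemize}
\item The rank of $\sigma$, and whether $\omega=0$, are such invariants. They are $1$ and yes for $F_3^1$, $1$ and no for $G_3$, $0$ for $A_3(-1)$, and $2$ for $A_3(\mu)$ with $\mu\neq-1$. This separates $F_3^1$ and $G_3$ from the whole family, and from each other.
\item The rank of $\varphi$ is $1$ exactly for $\mu=0$ within the family.
\item For nondegenerate $\varphi$ with Gram matrix $M$, the cosquare $M^{-1}M^{T}$ changes by conjugation under congruence $M\mapsto P^TMP$ and is unchanged by scaling. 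Its eigenvalue multiset is therefore an invariant. For $A_3(\mu)$ it is $\{\mu,\mu^{-1}\}$, and this determines $\mu$ up to $\mu\leftrightarrow\mu^{-1}$.
\item Conversely, swapping $e_1,e_3$ and dividing $c$ by $\mu$ realizes $A_3(\mu)\cong A_3(\mu^{-1})$ for $\mu\neq0$.
\end{itemize}

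\textbf{Main obstacle.} I expect the main care to be needed in the rank-$2$ case. One must check that the swap really covers the case $s+a=0$, and that the cosquare invariant behaves correctly at the special values $\mu=\pm1$ and in the degenerate case $\mu=0$. The case $\mu=0$ is handled separately by the rank of $\varphi$.
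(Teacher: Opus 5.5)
Your proof is correct and follows the paper's route: you reduce to congruence of nonzero forms up to scaling, split $\varphi=\sigma+\omega$, and run essentially the same case analysis on $\rank\sigma$. The differences are only in packaging. You separate the classes using $\rank\sigma$, the vanishing of $\omega$, $\rank\varphi$, and the spectrum of the cosquare $M^{-1}M^{T}$; for nondegenerate forms the cosquare carries the same information as the projective roots of the paper's invariant $\det(sM+tM^{T})$. You also place $F_3^2,F_3^3$ in the family via these invariants, whereas the paper writes down explicit isomorphisms.
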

\begin{proof}
An isomorphism $A_\varphi\to A_\psi$ preserves $A^2=\K c$ and induces an invertible map $P$ on $V$. If $c\mapsto\kappa c$, preservation of multiplication is equivalent to $\psi(Px,Py)=\kappa\varphi(x,y)$, with $\kappa\ne0$. Conversely, this gives an isomorphism. The problem is therefore congruence of nonzero two-dimensional bilinear forms up to a nonzero scalar.

Write $\varphi=\sigma+\omega$ with $\sigma$ symmetric and $\omega$ alternating. If $\omega=0$, the rank of $\sigma$ is one or two, yielding $F_3^1$ or $A_3(1)$. If $\sigma=0$, the form is nondegenerate alternating, yielding $A_3(-1)$. If both parts are nonzero and $\rank\sigma=1$, choose $\sigma(x,y)=x_1y_1$ and scale the second vector so that $\omega(x,y)=x_1y_3-x_3y_1$. This gives $G_3$. If $\rank\sigma=2$, choose $\sigma(x,y)=x_1y_3+x_3y_1$. Then
\[\varphi(x,y)=(1+k)x_1y_3+(1-k)x_3y_1.\]
Scaling, and interchanging basis vectors if $k=-1$, yields $A_3(\mu)$.

For a form matrix $M$, congruence and scalar multiplication change $\det(sM+tM^T)$ by a nonzero factor. For $F_3^1$ it is identically zero; for $A_3(\mu)$ it is $-(s+\mu t)(\mu s+t)$; for $G_3$ it is $(s-t)^2$. The double root shared with $A_3(-1)$ does not identify them, since only the latter form is alternating. Comparing the projective roots for $A_3(\mu)$ gives equal or reciprocal parameters. Conversely, interchanging $e_1,e_3$ and rescaling $e_2$ gives the reciprocal isomorphism if $\mu\ne0$.

The maps from $A_3(0)$ to $F_3^2$ given by $e_1\mapsto e_3$, $e_3\mapsto e_1-e_3$, $e_2\mapsto e_2$, and from $A_3(1)$ to $F_3^3$ given by
\[e_1\mapsto e_1+\mathrm i e_3,\qquad e_3\mapsto\tfrac12(e_1-\mathrm i e_3),\qquad e_2\mapsto e_2\]
are invertible and preserve all products.
\end{proof}
\subsection{Biderivations and conformal endomorphisms}
For a nonzero form $\varphi$ on $V$, define
\[\rad\varphi=\{v:\varphi(v,V)=\varphi(V,v)=0\},\]
\[\mathfrak{co}(\varphi)=\{X\in\End(V):\varphi(Xx,y)+\varphi(x,Xy)=\lambda(X)\varphi(x,y)\}.
\]
Since $\varphi\ne0$, the scalar $\lambda(X)$ is uniquely determined and linear in $X$.
\begin{theorem}\label{conformal}
Suppose $\rad\varphi=0$. Then $\Ann(A_\varphi)=\K c$. Biderivations of $A_\varphi$ correspond bijectively to pairs $(T,\tau)$, where $T:V\times V\to V$ and $\tau:V\times V\to\K$ are bilinear, $\tau$ is arbitrary, and $T(\cdot,y),T(x,\cdot)\in\mathfrak{co}(\varphi)$ for all $x,y\in V$. The correspondence is
\begin{align*}
B(x,y)&=T(x,y)+\tau(x,y)c,\\
B(c,y)&=\lambda(T(\cdot,y))c,\qquad B(x,c)=\lambda(T(x,\cdot))c,\qquad B(c,c)=0.
\end{align*}
The map $B$ is symmetric or skew-symmetric exactly when both $T$ and $\tau$ have that property.
\end{theorem}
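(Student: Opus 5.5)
Write a biderivation $B$ of $A_\varphi$ in components. Since $\rad\varphi=0$, an element $x+sc$ annihilates $A_\varphi$ exactly when $\varphi(x,V)=\varphi(V,x)=0$, that is, when $x\in\rad\varphi=0$. Hence $\Ann(A_\varphi)=\K c=A^2$. By Proposition~\ref{preserve}, $B(c,\cdot)$ and $B(\cdot,c)$ take values in $\K c$.

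For $x,y\in V$ decompose $B(x,y)=T(x,y)+\tau(x,y)c$ with $T$ bilinear $V\times V\to V$ and $\tau$ a bilinear form. Write $B(c,y)=\ell(y)c$, $B(x,c)=r(x)c$ and $B(c,c)=\beta c$. Because $A^2=\K c$ is central and squares to zero, every product lies in $\K c$ and depends only on $V$-components.

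I then expand \eqref{bi1} on all triples of generators.
\begin{itemize}
\item For $x,y,z\in V$, the left side is $\varphi(x,y)B(c,z)=\varphi(x,y)\ell(z)c$. The right side is $\big(\varphi(T(x,z),y)+\varphi(x,T(y,z))\big)c$, since the $c$-parts of $B$ multiply to zero. So the condition reads $\varphi(Xx,y)+\varphi(x,Xy)=\ell(z)\varphi(x,y)$ with $X=T(\cdot,z)$. This says exactly that $T(\cdot,z)\in\mathfrak{co}(\varphi)$ with $\lambda(T(\cdot,z))=\ell(z)$.
\item When one of $x,y$ equals $c$, the left side is $B(0,z)=0$. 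The right side involves products of $c$-valued outputs with elements of $A$, or of elements of $A$ with $c$, so it vanishes automatically.
\item When $z=c$, the left side is $\varphi(x,y)B(c,c)=\beta\varphi(x,y)c$. The right side is a product of a multiple of $c$ with $y$ (or of $x$ with a multiple of $c$), which is zero. Since $\varphi\neq0$, this forces $\beta=0$.
\end{itemize}
Symmetrically, \eqref{bi2} gives $T(x,\cdot)\in\mathfrak{co}(\varphi)$ with $\lambda(T(x,\cdot))=r(x)$, and imposes nothing further.

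The form $\tau$ never appears in these conditions, because $c$-valued outputs are annihilators; so $\tau$ is arbitrary. Conversely, given $(T,\tau)$ with the stated property, define $B$ by the displayed formulas. Linearity of $\lambda$, noted before the theorem, makes $\ell$ and $r$ linear, so $B$ is bilinear. The computation above, read backwards, shows that $B$ satisfies \eqref{bi1} and \eqref{bi2}.

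The correspondence is bijective. $B$ determines $T$ and $\tau$ by restriction to $V\times V$. Conversely, $(T,\tau)$ determines $\ell$ and $r$ through $\lambda$, and $\beta=0$ is forced.

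For the symmetry statement, $B^\tau$ corresponds to $(T^\tau,\tau^\tau)$. This holds because the values $B(c,y)$ and $B(y,c)$ are computed from $T(\cdot,y)$ and $T(y,\cdot)$ respectively, so they swap under transposition. Hence $B=\pm B^\tau$ if and only if $T=\pm T^\tau$ and $\tau=\pm\tau^\tau$.

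The main point needing care is the bookkeeping of which terms vanish: that $c$ multiplies everything to zero, and that the $\K c$-components of $B(x,z)$ drop out of the products in \eqref{bi1}. The remaining risk is to confirm that $\lambda$ is well defined and linear. This uses $\varphi\ne0$, which is already noted in the paper before the theorem.
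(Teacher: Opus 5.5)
Your proposal is correct and follows essentially the same route as the paper. Proposition~\ref{preserve} puts $B(c,\cdot)$ and $B(\cdot,c)$ in $\K c$; \eqref{bi1} and \eqref{bi2} on triples from $V$ become the conformal conditions with multipliers $\lambda(T(\cdot,z))$ and $\lambda(T(x,\cdot))$; $c$ in the unused slot forces $B(c,c)=0$; and transposition, together with the linearity of $\lambda$, gives the symmetry claim, with your case-by-case treatment of triples involving $c$ being a more explicit version of the paper's converse verification.
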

\begin{proof}
The radical hypothesis gives the annihilator assertion. Proposition~\ref{preserve} makes $B(c,y),B(x,c)$ multiples of $c$. For $x,y,z\in V$, \eqref{bi1} is
\[\varphi(x,y)B(c,z)=\big(\varphi(T(x,z),y)+\varphi(x,T(y,z))\big)c.\]
Choose $x,y$ with $\varphi(x,y)\ne0$ to identify the coefficient of $B(c,z)$ and obtain $T(\cdot,z)\in\mathfrak{co}(\varphi)$. Equation~\eqref{bi2} gives the other partial condition. Taking the third argument in \eqref{bi1} to be $c$ gives $B(c,c)=0$.

Conversely, the displayed definition satisfies the identities on triples from $V$ by the conformal conditions. If a product input is $c$, the relevant products vanish because $c$ and the corresponding values are in the annihilator. If the unused argument is $c$, both sides vanish by $B(c,c)=0$. Trilinearity completes the verification. Projection onto $V$ and $\K c$ gives uniqueness. Interchanging inputs and using linearity of $\lambda$ proves the symmetry assertion.
\end{proof}
\begin{lemma}\label{regular}
Let $W\subseteq\End(V)$ be a commutative associative subalgebra containing the identity. Suppose $u\in V$ makes $W\to V$, $X\mapsto Xu$, bijective. Then $V$ becomes a commutative associative algebra with unit $u$ and multiplication $x\cdot y=X_xy$, where $X_xu=x$. Bilinear maps with all partial operators in $W$ are exactly
\[T_w(x,y)=w\cdot x\cdot y,\qquad w\in V.\]
They are symmetric and form a space of dimension $\dim V$.
\end{lemma}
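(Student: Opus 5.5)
The argument has two parts: first show that $V$ carries a unital commutative associative product whose left multiplications are exactly the operators in $W$, and then use this to pin down every admissible bilinear map by its value at $(u,u)$.

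\emph{The algebra structure on $V$.} For $x\in V$, let $X_x\in W$ be the unique element with $X_xu=x$; it exists and is unique because $X\mapsto Xu$ is bijective, and $x\mapsto X_x$ is linear. Put $x\cdot y=X_xy$. We have $X_u=I$ since $Iu=u$, so $u\cdot y=y$. Also $x\cdot u=X_xu=x$. Commutativity follows from commutativity of $W$:
\[x\cdot y=X_xX_yu=X_yX_xu=y\cdot x.\]
For associativity, note that $X_xX_y\in W$ sends $u$ to $X_xy=x\cdot y$. Uniqueness then gives $X_{x\cdot y}=X_xX_y$, and hence
\[(x\cdot y)\cdot z=X_xX_yz=x\cdot(y\cdot z).\]
The same identity $X_{x\cdot y}=X_xX_y$ shows that left multiplications are precisely the elements of $W$.

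\emph{Sufficiency.} Fix $w\in V$ and set $T_w(x,y)=w\cdot x\cdot y$. The partial operator $T_w(\cdot,y)$ is left multiplication by $w\cdot y$, namely $X_{w\cdot y}\in W$. Since the product is commutative, the same holds for $T_w(x,\cdot)$. Thus $T_w$ has all partial operators in $W$, and it is visibly symmetric. The map $w\mapsto T_w$ is linear and injective, because $T_w(u,u)=w$.

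\emph{Necessity.} Let $T$ be bilinear with all partial operators in $W$, and set $w=T(u,u)$. The operator $T(\cdot,u)$ lies in $W$ and sends $u$ to $w$, so it equals $X_w$; hence $T(x,u)=w\cdot x$. Now fix $x$. The operator $T(x,\cdot)$ lies in $W$ and sends $u$ to $w\cdot x$, so it equals $X_{w\cdot x}$. Therefore $T(x,y)=(w\cdot x)\cdot y=T_w(x,y)$. This establishes the bijection $w\leftrightarrow T_w$ and the dimension count $\dim V$.

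No step is a genuine obstacle. The only point needing care is the identification $X_{x\cdot y}=X_xX_y$, which uses closure of $W$ under composition together with uniqueness of the preimage of $u$.
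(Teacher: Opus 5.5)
Your proof is correct and follows essentially the same route as the paper: commutativity from $X_xX_yu=X_yX_xu$, associativity from $X_{x\cdot y}=X_xX_y$ (the paper's ``identifying $V$ with $W$ through evaluation at $u$''), and necessity by using the partial operators together with $w=T(u,u)$. The only cosmetic difference is that the paper writes $T(x,y)=g(y)\cdot x=h(x)\cdot y$ and sets $x=u$, whereas you evaluate $T(\cdot,u)$ first and then $T(x,\cdot)$; this is the same argument.
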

\begin{proof}
Since $X_xX_y\in W$ and these operators commute, $X_xy=X_xX_yu=X_yx$. Identifying $V$ with $W$ through evaluation at $u$ proves associativity. Every operator in $W$ is multiplication by a unique element. A bilinear map with partial operators in $W$ therefore satisfies $T(x,y)=g(y)\cdot x=h(x)\cdot y$. Set $x=u$ to obtain $g(y)=h(u)\cdot y$, so $T=T_{h(u)}$. The converse is immediate.
\end{proof}
\begin{lemma}\label{cos}
Relative to the ordered basis $(e_1,e_3)$ of $V$:
\begin{enumerate}
\item For $A_3(\mu)$, $\mu\ne-1$, $\mathfrak{co}(\varphi)$ is the diagonal algebra and $\lambda(\operatorname{diag}(a,d))=a+d$.
\item For $A_3(-1)$, $\mathfrak{co}(\varphi)=\End(V)$ and $\lambda(X)=\tr X$.
\item For $G_3$, $\mathfrak{co}(\varphi)=\spn{I,N}$, where $Ne_1=e_3$, $Ne_3=0$, and $\lambda(pI+qN)=2p$.
\item For $F_3^3$, $\mathfrak{co}(\varphi)=\spn{I,J}$, where $Je_1=e_3$, $Je_3=-e_1$, and $\lambda(pI+qJ)=2p$.
\item For $F_3^2$, it is the diagonal algebra in $u_1=e_3$, $u_2=e_1-e_3$, with $\lambda(X)=\tr X$.
\end{enumerate}
\end{lemma}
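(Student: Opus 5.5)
The plan is a direct matrix computation. Write the form as a Gram matrix $M$ with respect to the ordered basis $(e_1,e_3)$, so that $\varphi(x,y)=x^TMy$. The defining condition for $X\in\mathfrak{co}(\varphi)$ then becomes the linear matrix equation
\[X^TM+MX=\lambda M .\]
For each algebra I will write down $M$, take $X=\begin{pmatrix}a&b\\ c&d\end{pmatrix}$, and solve. The four entries of this $2\times2$ equation are linear in $a,b,c,d,\lambda$. I then read off the solution space together with the scalar $\lambda$.

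\textbf{Cases 1 and 2.} For $A_3(\mu)$ the matrix is $M=\begin{pmatrix}0&1\\ \mu&0\end{pmatrix}$.
\begin{itemize}
\item The $(1,1)$ entry of $X^TM+MX$ is $(1+\mu)c$.
\item The $(2,2)$ entry is $(1+\mu)b$.
\item The off-diagonal entries are $a+d$ and $\mu(a+d)$.
\end{itemize}
The diagonal entries must vanish because $M$ has zero diagonal, and the off-diagonal entries must equal $\lambda$ and $\lambda\mu$. If $\mu\ne-1$, this forces $b=c=0$ and $\lambda=a+d$, which is item 1. If $\mu=-1$, both diagonal constraints vanish identically, and $\lambda=a+d=\tr X$, which is item 2. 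Item 2 is also the classical identity $X^TJ+JX=(\tr X)J$ for the symplectic matrix $J$.

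\textbf{Case 3.} For $G_3$ the matrix is $M=\begin{pmatrix}1&1\\-1&0\end{pmatrix}$. The equation gives the following.
\begin{itemize}
\item The $(1,1)$ entry gives $2a=\lambda$.
\item The $(2,2)$ entry gives $2b=0$ after the symmetric contributions are worked out carefully.
\item The off-diagonal entries give two equations in $a,b,c,d$.
\end{itemize}
I expect the solution to be $b=0$, $a=d$ with $c$ free. Here $c$ is the coefficient of $N$, and $\lambda=2p$.

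A cleaner way to organise Case 3 is to split the equation into symmetric and alternating parts. The condition holds if and only if $X$ is conformal for $\sigma$ and for $\omega$ with the same $\lambda$.
\begin{itemize}
\item For the alternating part $\omega$, every $X$ qualifies, with $\lambda=\tr X$.
\item For the rank-one symmetric part $\sigma=x_1y_1$, the condition requires $X^T$ to preserve the line spanned by $e_1^*$. This gives $b=0$ and $\lambda=2a$.
\end{itemize}
Equating the two values of $\lambda$ gives $\tr X=2a$, so $d=a$. This yields $\spn{I,N}$. I will use this decomposition for the two remaining cases as well.

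\textbf{Cases 4 and 5.} For $F_3^3$ and $F_3^2$ I will not recompute from scratch. Instead I transport the results of items 1 and 2 through the explicit isomorphisms given in the proof of Theorem~\ref{class3}.
\begin{itemize}
\item If $P$ is the induced map on $V$ and $\psi(Px,Py)=\kappa\varphi(x,y)$, then $\mathfrak{co}(\psi)=P\,\mathfrak{co}(\varphi)P^{-1}$, and $\lambda$ is unchanged.
\item For $F_3^2\cong A_3(0)$, the map sends $e_1\mapsto e_3$ and $e_3\mapsto e_1-e_3$. The diagonal algebra in the basis $(e_1,e_3)$ therefore becomes the diagonal algebra in the basis $u_1=e_3$, $u_2=e_1-e_3$, with $\lambda=\tr X$.
\item For $F_3^3$ the form is symmetric and nondegenerate, namely the standard form $x_1y_1+x_3y_3$ read off from the table of $F^3$ in dimension three.
\end{itemize}
The condition in Case 4 is $X^T+X=\lambda I$. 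Its solutions are $pI$ plus skew matrices, so $\mathfrak{co}(\varphi)=\spn{I,J}$ with $\lambda=2p$. This is simpler than transporting, so I will compute Case 4 directly.

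\textbf{Main obstacle.} The main difficulty is bookkeeping rather than ideas. I must identify correctly which basis vectors of $F_3^2$ and $F_3^3$ play the role of $e_1,e_3$ in the space $V$, and which vector spans $A^2=\K c$; recall that in $F_3^i$ the square is spanned by $e_2$. I also need to keep the transpose convention in $X^TM+MX$ consistent. I will double-check each case by plugging the basis operators back into $\varphi(Xx,y)+\varphi(x,Xy)$ on the basis pairs.
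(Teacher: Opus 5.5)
Your proposal is correct. For items 1, 2 and 4 it is the paper's own computation: the Gram-matrix equation $X^TM+MX=\lambda M$, with the same entries $(1+\mu)c$, $(1+\mu)b$, $a+d$, $\mu(a+d)$ for $A_3(\mu)$, and $X^T+X=\lambda I$ for $F_3^3$.

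There is one slip in your first pass at $G_3$. The $(2,2)$ entry of $X^TM+MX$ is identically zero, since its two contributions are $b$ and $-b$; it is not $2b$. The constraint $b=0$ comes instead from adding the off-diagonal equations $a+b+d=\lambda$ and $b-a-d=-\lambda$. Their difference gives $a+d=\lambda=2a$, hence $d=a$. None of the four equations involves $c$, which is why $c$ is free. Your splitting of $X^TM+MX=\lambda M$ into symmetric and alternating parts, which must match separately with a common $\lambda$, is correct. It reaches the paper's conclusion ($g=0$, $d=a$, multiplier $2a$ in the paper's notation) more transparently.

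The genuinely different step is item 5. The paper argues directly from the form. It writes $\varphi(x,y)=(x_1+x_3)y_1$ as a product of two independent linear forms and observes that the dual operator $X^*$ must have both forms as eigenvectors. Hence $X$ is diagonal in the dual basis $(u_1,u_2)$, with $\lambda=\tr X$.

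You instead conjugate item 1 at $\mu=0$ by the explicit isomorphism $P$ from Theorem~\ref{class3}, using $\mathfrak{co}(\psi)=P\,\mathfrak{co}(\varphi)P^{-1}$ with $\lambda$ unchanged. Your route reuses earlier work and explains why $u_1=Pe_1$ and $u_2=Pe_3$ appear. It does depend on verifying $\psi(Px,Py)=\varphi(x,y)$, which holds with $\kappa=1$. The paper's route is self-contained and does not rely on the classification theorem.
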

\begin{proof}
Write $Xe_1=ae_1+be_3$, $Xe_3=ge_1+de_3$. For $A_3(\mu)$, the diagonal entries of $X^TM+MX$ are $(1+\mu)b,(1+\mu)g$, and its off-diagonal entries are $a+d,\mu(a+d)$. Thus $b=g=0$ for $\mu\ne-1$; otherwise there is no restriction and the multiplier is $a+d$. For $G_3$, the four entries give $g=0,d=a$, with multiplier $2a$. For $F_3^3$, the equation is $X^T+X=\lambda I$, giving $d=a,g=-b$. For $F_3^2$, the form is $(x_1+x_3)y_1$. The independent linear forms $x\mapsto x_1+x_3$ and $x\mapsto x_1$ must be eigenvectors for the dual operator, equivalently the operator is diagonal in their dual basis $(u_1,u_2)$.
\end{proof}
\subsection{Explicit bases and the exceptional alternating case}
Write $E_{pq}=E_{p,q}^{e_2}$ for $p,q\in\{1,3\}$.
\begin{theorem}\label{three}
The following descriptions give all biderivations.
\begin{enumerate}
\item For $A_3(\mu)$, $\mu\ne-1$, a basis is $D_1,D_3,E_{11},E_{13},E_{31},E_{33}$, where
\begin{align*}
D_1(e_1,e_1)&=e_1,&D_1(e_1,e_2)=D_1(e_2,e_1)&=e_2,\\
D_3(e_3,e_3)&=e_3,&D_3(e_3,e_2)=D_3(e_2,e_3)&=e_2.
\end{align*}
The dimension triple is $(6,5,1)$ and $\bskew=\spn{E_{13}-E_{31}}$.
\item For $A_3(-1)$ and $p,q,r\in\{1,3\}$, define $M_{pqr}$ by
\[M_{pqr}(e_p,e_q)=e_r,\quad M_{pqr}(e_2,e_q)=\delta_{pr}e_2,\quad M_{pqr}(e_p,e_2)=\delta_{qr}e_2.\]
The eight maps $M_{pqr}$ and four maps $E_{pq}$ form a basis. The dimension triple is $(12,9,3)$, with skew-symmetric basis
\[M_{131}-M_{311},\qquad M_{133}-M_{313},\qquad E_{13}-E_{31}.\]
\item For $G_3$, a basis is $P_1,P_2,E_{11},E_{13},E_{31},E_{33}$, where
\begin{gather*}
P_1(e_1,e_1)=e_1,\quad P_1(e_1,e_3)=P_1(e_3,e_1)=e_3,\\
P_1(e_1,e_2)=P_1(e_2,e_1)=2e_2,\qquad P_2=E_{1,1}^{e_3}.
\end{gather*}
The dimension triple is $(6,5,1)$ and $\bskew=\spn{E_{13}-E_{31}}$.
\end{enumerate}
The case $F_3^1$ is covered by Theorem~\ref{f1} and has dimension triple $(9,7,2)$.
\end{theorem}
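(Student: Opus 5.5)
The plan is to apply Theorem~\ref{conformal} in each case. Its hypothesis $\rad\varphi=0$ holds for $A_3(\mu)$ and $G_3$, since $\det M\neq0$ there: $\det M=-\mu$ for $\mu\ne0$, and for $\mu=0$ the form $x_1y_3$ still has trivial two-sided radical. Theorem~\ref{conformal} then reduces the problem to two independent pieces. The first is an arbitrary bilinear form $\tau$ with values in $\K c=\K e_2$; these contribute exactly the four maps $E_{11},E_{13},E_{31},E_{33}$, and also account for the single skew map $E_{13}-E_{31}$. The second is a bilinear map $T:V\times V\to V$ all of whose partial operators lie in $\mathfrak{co}(\varphi)$. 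The $c$-values of $B$ are then forced to be $\lambda(T(\cdot,y))c$ and $\lambda(T(x,\cdot))c$, and Lemma~\ref{cos} supplies $\mathfrak{co}(\varphi)$ and $\lambda$ in each case.

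For $A_3(\mu)$ with $\mu\ne-1$, and for $G_3$, Lemma~\ref{cos} gives a two-dimensional commutative algebra containing $I$. For $A_3(\mu)$ this is the diagonal algebra, with cyclic vector $u=e_1+e_3$. For $G_3$ it is $\spn{I,N}$, with cyclic vector $u=e_1$, since $Iu=e_1$ and $Nu=e_3$. Lemma~\ref{regular} then shows that the admissible $T$ form a two-dimensional space, all of them symmetric.

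For the diagonal case the induced product on $V$ is coordinatewise, so $T(e_p,e_q)=\delta_{pq}w_pe_p$. Taking $w=e_1$ and $w=e_3$ and computing $\lambda(T(\cdot,e_1))=\lambda(\operatorname{diag}(1,0))=1$ recovers $D_1$ and $D_3$.

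For $G_3$ the algebra is $\K[N]/(N^2)$ with $e_3\cdot e_3=0$. Taking $w=e_1$ gives $T(e_1,e_1)=e_1$, $T(e_1,e_3)=T(e_3,e_1)=e_3$ and $T(e_3,e_3)=0$. Here the partial operator at $e_1$ is $I$, with $\lambda=2$, which recovers $P_1$ with its $2e_2$ values. Taking $w=e_3$ gives $T(e_1,e_1)=e_3$, with operator $N$ and $\lambda=0$, which is $P_2=E^{e_3}_{1,1}$. Counting $2+4$ parameters gives dimension $6$. Since $\tau$ contributes three symmetric maps and one skew map, the triple is $(6,5,1)$.

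For $A_3(-1)$ we have $\mathfrak{co}(\varphi)=\End(V)$, so every bilinear $T$ is allowed, giving $8$ parameters, and $\lambda=\tr$. The basis maps $T=e_r$ at $(e_p,e_q)$ give $M_{pqr}$. Indeed, $T(\cdot,e_q)$ sends $e_p$ to $e_r$, so its trace is $\delta_{pr}$, which yields $M_{pqr}(e_2,e_q)=\delta_{pr}e_2$; the symmetric argument gives the other $c$-value. The total dimension is $8+4=12$.

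By the last clause of Theorem~\ref{conformal}, skew biderivations correspond to skew $T$ together with skew $\tau$. Skew $T:V\times V\to V$ form a space of dimension $1\cdot 2=2$, spanned by $M_{131}-M_{311}$ and $M_{133}-M_{313}$. Together with $E_{13}-E_{31}$ this gives three skew maps, so the triple is $(12,9,3)$.

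Finally, $F_3^1$ is the case $n=3$ of Theorem~\ref{f1}, which gives $(n+6,n+4,2)=(9,7,2)$.

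The main obstacle is bookkeeping the correct $c$-values $\lambda(\cdot)$ in each basis map, particularly for $G_3$, where $\lambda(pI+qN)=2p$ produces the factor $2$. The second point needing care is checking the cyclic vector hypothesis of Lemma~\ref{regular} in each commutative case; both are direct.
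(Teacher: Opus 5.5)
Your proposal is correct and follows essentially the same route as the paper. You apply Theorem~\ref{conformal}, read off $\mathfrak{co}(\varphi)$ and $\lambda$ from Lemma~\ref{cos}, and use Lemma~\ref{regular} with the same cyclic vectors $u=e_1+e_3$ and $u=e_1$ and the same choices $w=e_1,e_3$; you then handle $A_3(-1)$ through the elementary maps $T_{pqr}$ and their traces, and you add explicit checks (vanishing radical including $\mu=0$, and the symmetry clause for counting the skew part) that the paper leaves implicit.
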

\begin{proof}
In (1)--(3) the two-sided radical is zero, so Theorem~\ref{conformal} applies. The arbitrary scalar form $\tau$ supplies the four $E_{pq}$.
In (1), the conformal algebra is diagonal. Lemma~\ref{regular} applies with $u=e_1+e_3$ and coordinatewise multiplication on $V\cong\K\times\K$. Hence $T(e_i,e_j)=\delta_{ij}w_ie_i$ for $i,j\in\{1,3\}$, giving two independent symmetric maps. The multipliers are one on the matching coordinate and zero on the other, so their extensions are $D_1,D_3$. Adding $\tau$ gives dimensions $6$, $2+3=5$, and $1$.

In (2), every bilinear $T:V\times V\to V$ is allowed. These maps form an eight-dimensional space with elementary basis $T_{pqr}(e_p,e_q)=e_r$. Partial operators have traces $\delta_{pr}$ and $\delta_{qr}$ at the indicated inputs, producing $M_{pqr}$. Symmetric $T$ have dimension six, skew-symmetric $T$ dimension two. Adding the three symmetric and one skew-symmetric scalar forms yields $(12,9,3)$ and the stated basis.

In (3), Lemma~\ref{regular} applies with $u=e_1$ and $e_3\cdot e_3=0$. Choosing $w=e_1,e_3$ yields the $V$-parts of $P_1,P_2$. Their multipliers give the displayed values involving $e_2$. These maps are symmetric, so the dimensions follow as in (1). Uniqueness in Theorem~\ref{conformal} proves independence and completeness in all cases.
\end{proof}
\begin{example}\label{exception}
On $A_3(-1)$ let $S=M_{131}-M_{311}$. Its nonzero values are
\begin{gather*}
S(e_1,e_3)=e_1,\quad S(e_3,e_1)=-e_1,\\
S(e_2,e_3)=e_2,\quad S(e_3,e_2)=-e_2.
\end{gather*}
It is a skew-symmetric biderivation by Theorem~\ref{three}, but its image is not contained in $\Ann(A_3(-1))=\K e_2$. Thus annihilator-valuedness is not automatic even for filiform Zinbiel algebras.
\end{example}
\subsection{The original bases of \texorpdfstring{$F_3^2$ and $F_3^3$}{F32 and F33}}
The generic formulas in Section~4 require care in dimension three. We record the complete results in the original bases.
\begin{proposition}\label{original3}
In each of $F_3^2,F_3^3$, the maps $E_{11},E_{13},E_{31},E_{33}$ together with the following two symmetric maps form a basis of $\Bider$. For $F_3^2$, the extra maps are $S_1,S_2$, where
\begin{gather*}
S_1(e_1,e_1)=e_1,\qquad S_1(e_1,e_3)=S_1(e_3,e_1)=S_1(e_3,e_3)=e_3,\\
S_1(e_1,e_2)=S_1(e_2,e_1)=2e_2,\qquad S_1(e_2,e_3)=S_1(e_3,e_2)=e_2,
\end{gather*}
and
\[S_2(x,y)=(x_1+x_3)(y_1+y_3)e_3+\big((x_1+x_3)y_2+x_2(y_1+y_3)\big)e_2.\]
For $F_3^3$, the extra maps are $P,Q$, where
\begin{gather*}
P(e_1,e_1)=e_1,\quad P(e_1,e_3)=P(e_3,e_1)=e_3,\quad P(e_3,e_3)=-e_1,\\
P(e_1,e_2)=P(e_2,e_1)=2e_2,\\[2pt]
Q(e_1,e_1)=e_3,\quad Q(e_1,e_3)=Q(e_3,e_1)=-e_1,\quad Q(e_3,e_3)=-e_3,\\
Q(e_3,e_2)=Q(e_2,e_3)=-2e_2.
\end{gather*}
Both have dimension triple $(6,5,1)$ and skew-symmetric basis $E_{13}-E_{31}$.
\end{proposition}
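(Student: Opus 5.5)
The plan is to treat both algebras through Theorem~\ref{conformal}, exactly as in the proof of Theorem~\ref{three}. In the original bases, $F_3^2$ has $e_1\circ e_1=e_3\circ e_1=e_2$, so $\varphi(x,y)=(x_1+x_3)y_1$ with $c=e_2$. The algebra $F_3^3$ has $e_1\circ e_1=e_3\circ e_3=e_2$, so $\varphi(x,y)=x_1y_1+x_3y_3$.

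\emph{Step 1: the radical is zero.} For $F_3^2$, the condition $\varphi(v,V)=0$ forces $v_1+v_3=0$, and $\varphi(V,v)=0$ forces $v_1=0$; together they give $v=0$. For $F_3^3$ the form is nondegenerate. Hence Theorem~\ref{conformal} applies. The arbitrary scalar form $\tau$ contributes the four maps $E_{11},E_{13},E_{31},E_{33}$: three of them span the symmetric part and one spans the skew-symmetric part.

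\emph{Step 2: the bilinear part $T$.} By Lemma~\ref{cos}(4),(5), $\mathfrak{co}(\varphi)$ is a two-dimensional commutative unital algebra with a cyclic vector, so Lemma~\ref{regular} applies.
\begin{itemize}
\item For $F_3^3$, take $u=e_1$. Since $Je_1=e_3$ and $J^2=-I$, the resulting multiplication on $V$ is $e_1\cdot x=x$ and $e_3\cdot e_3=-e_1$. Then $T_{e_1}$ gives the $V$-part of $P$, and $T_{e_3}$ gives that of $Q$.
\item For $F_3^2$, take $u=u_1+u_2=e_1$. The multiplication is coordinatewise in the idempotents $u_1=e_3$ and $u_2=e_1-e_3$, and $x=(x_1+x_3)u_1+x_1u_2$. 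Then $T_{e_1}(x,y)=x\cdot y$ is the $V$-part of $S_1$. Likewise $T_{u_1}(x,y)=(x_1+x_3)(y_1+y_3)e_3$ is the $V$-part of $S_2$.
\end{itemize}
In both cases $T$ is automatically symmetric, and the space of admissible $T$ is two-dimensional.

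\emph{Step 3: the values involving $e_2$.} These come from the multipliers of the partial operators, using $B(e_2,y)=\lambda(T(\cdot,y))e_2$.
\begin{itemize}
\item For $F_3^3$, with $\lambda(pI+qJ)=2p$: $P(\cdot,e_1)=I$ and $P(\cdot,e_3)=J$ give $P(e_2,e_1)=2e_2$ and $P(e_2,e_3)=0$. Similarly $Q(\cdot,e_1)=J$ and $Q(\cdot,e_3)=-I$ give $Q(e_2,e_1)=0$ and $Q(e_2,e_3)=-2e_2$.
\item For $F_3^2$, $\lambda=\tr$. The partial operator of $S_1$ at $y$ has eigenvalues $y_1+y_3$ and $y_1$, so its trace is $2y_1+y_3$. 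This gives $S_1(e_2,e_1)=2e_2$ and $S_1(e_2,e_3)=e_2$. For $S_2$ the partial operator is $(y_1+y_3)$ times the projection onto $u_1$, with trace $y_1+y_3$.
\end{itemize}
The remaining values follow by symmetry and from $B(c,c)=0$. All of these must match the displayed tables.

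\emph{Step 4: conclusion.} Uniqueness in Theorem~\ref{conformal} gives independence and completeness. The symmetric part has dimension $2+3=5$, the skew-symmetric part is $\spn{E_{13}-E_{31}}$, and the dimension triple is $(6,5,1)$.

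The main obstacle is purely bookkeeping: translating the eigenbasis $(u_1,u_2)$ for $F_3^2$ back to the original coordinates and checking each multiplier value against the tables. An alternative is to transport the $A_3(0)$ and $A_3(1)$ results through the explicit isomorphisms of Theorem~\ref{class3}, but the direct route above avoids recomputing $\tau$.
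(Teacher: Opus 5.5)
Your proposal is correct and follows the paper's proof. Like the paper, you show the radical is zero and apply Theorem~\ref{conformal}, then use Lemma~\ref{cos}(4),(5) with Lemma~\ref{regular} (unit $e_1$, choices $w=e_1,u_1$ for $F_3^2$ and $w=e_1,e_3$ for $F_3^3$), and read off the $e_2$-values from the multipliers $\lambda=\tr$ and $\lambda(pI+qJ)=2p$. Your entry-by-entry check of the tables just spells out what the paper leaves implicit. One wording fix: the symmetric $\tau$-directions are spanned by $E_{11},E_{33},E_{13}+E_{31}$, not by three of the four maps $E_{pq}$ themselves.
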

\begin{proof}
For $F_3^2$, use the conformal algebra diagonal in $u_1=e_3,u_2=e_1-e_3$. Lemma~\ref{regular} identifies $V$ with the algebra of orthogonal idempotents $u_1,u_2$ and unit $e_1=u_1+u_2$. The choices $w=e_1,u_1$ give the $V$-parts of $S_1,S_2$. Their traces give the values involving $e_2$ via Theorem~\ref{conformal}. For $F_3^3$, use the algebra with unit $e_1$ and $e_3\cdot e_3=-e_1$. The choices $w=e_1,e_3$ give $P,Q$, with their values involving $e_2$ supplied by the multipliers. In both cases there are two independent symmetric $T$-parts and four arbitrary scalar forms, proving completeness and the dimensions.
\end{proof}
\begin{remark}
The dimension formula for $F_n^2$ remains valid at $n=3$, but $H,R$ in \eqref{H}--\eqref{R} do not specialize to biderivations. For $H$, \eqref{bi1} at $(e_3,e_1,e_3)$ has left side $e_2$ and right side zero. For $R$, the failure occurs at $(e_3,e_1,e_1)$. For $F_n^3$, even the dimension formula fails at $n=3$: the four generic directions must be supplemented by $P,Q$. The argument excluding an $f$-component uses $(2-m)\gamma=0$, which gives no restriction when $m=2$.
\end{remark}

\section{Filiform algebras in dimension four}
Two additional algebras occur. For $c\in\{0,1\}$ define $Z_4^c$ by
\begin{equation}\label{Z4}e_1\circ e_4=e_2,\quad e_4\circ e_1=-e_2,\quad e_1\circ e_2=e_3,\quad e_4\circ e_4=ce_3.\end{equation}
Their powers are $A^2=\spn{e_2,e_3}$, $A^3=\K e_3$, $A^4=0$.
\subsection{Completeness of the list}
\begin{theorem}\label{class4}
Every four-dimensional complex filiform Zinbiel algebra is isomorphic to exactly one of $F_4^1,F_4^2,F_4^3,Z_4^0,Z_4^1$.
\end{theorem}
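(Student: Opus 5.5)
The plan is to mimic the three-dimensional reduction to bilinear forms, now with one extra layer. Let $A$ be four-dimensional filiform. Then $\dim A^2=2$, $\dim A^3=1$ and $A^4=0$.

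\textbf{Set-up.} By Lemma~\ref{powers} and $A^i\circ A^j\subseteq A^{i+j}$, we have $A\circ A^3=A^3\circ A=0$ and $A^2\circ A^2=0$, so $A^3\subseteq\Ann(A)$. Choose $e_3$ spanning $A^3$, $e_2\in A^2\setminus A^3$, and a complement $V$ of $A^2$. The structure is then encoded by:
\begin{itemize}
\item bilinear forms $\varphi,\theta$ on $V$ with $a\circ b=\varphi(a,b)e_2+\theta(a,b)e_3$ for $a,b\in V$;
\item linear forms $\psi,\chi$ on $V$ with $a\circ e_2=\psi(a)e_3$ and $e_2\circ a=\chi(a)e_3$.
\end{itemize}
Since $A^2=A\circ A$ is not contained in $A^3$, we have $\varphi\neq0$. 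Since $A^3=A\circ A^2\neq0$, we have $\psi\neq0$.

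\textbf{The Zinbiel conditions.} Substituting into \eqref{zin}, every triple containing an element of $A^2$ gives $0=0$, because $A^2\circ A^2=0$ and $A^3$ is annihilating. The only surviving condition comes from triples $a,b,z\in V$:
\[
\varphi(a,b)\,\chi(z)=\psi(a)\,\sigma(b,z),\qquad \sigma=\varphi+\varphi^{T}.
\]
This identity drives the case split.

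\emph{Case $\sigma=0$.} Here $\varphi$ is alternating and nonzero. Taking $a,b$ with $\varphi(a,b)\neq0$ forces $\chi=0$. Next choose $e_1\in V$ with $\psi(e_1)=1$ and $e_4\in\ker\psi$, scaled so that $e_1\circ e_4=e_2+\ast e_3$ and $e_4\circ e_1=-e_2+\ast e_3$. I then use the substitutions $e_1\mapsto e_1+\alpha e_2$, $e_4\mapsto e_4+\beta e_2$ and $e_2\mapsto e_2+\gamma e_3$, together with rescalings, to kill the $e_3$-components of $\theta$ wherever possible. Since $e_4\in\ker\psi$ and $\chi=0$, the shift $e_4\mapsto e_4+\beta e_2$ does not move $\theta(e_4,e_4)$. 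So the value $\theta(e_4,e_4)$ should survive as the only invariant, normalized to $c\in\{0,1\}$ by scaling. This should give $Z_4^c$.

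\emph{Case $\sigma\neq0$.} Picking $z$ and $b$ with $\sigma(b,z)\neq0$ shows $\varphi(a,b)=\psi(a)\beta(b)$ for a linear form $\beta$, so $\varphi$ has rank one. Feeding this back gives $\beta(b)\chi(z)=\sigma(b,z)=\psi(b)\beta(z)+\beta(b)\psi(z)$. This forces $\beta\propto\psi$ and $\chi=2\psi$ after scaling. Taking $e_1$ with $\psi(e_1)=1$ and $f\in\ker\psi$ recovers the $\NF_3$-type core $e_1\circ e_1=e_2$, $e_1\circ e_2=e_3$, $e_2\circ e_1=2e_3$. The remaining freedom is the form $\theta$ restricted to pairs involving $f$, modulo the substitutions $e_1\mapsto e_1+\alpha f+\beta e_2$, $f\mapsto \kappa f+\gamma e_1+\delta e_2$ and $e_2\mapsto e_2+\epsilon e_3$. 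I expect to kill $\theta(e_1,e_1)$ and $\theta(e_1,f)$. The pair $(\theta(f,e_1),\theta(f,f))$ should reduce to one of $(0,0)$, $(1,0)$, $(0,1)$, that is, $F_4^1$, $F_4^2$, $F_4^3$, exactly as in the $F_{\alpha,\beta}$ description \eqref{fab}.

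\textbf{Main obstacle: the second normalization and the distinctness.} In the $\sigma\neq0$ case one must check that the substitutions act on $(\theta(f,e_1),\theta(f,f))$ with precisely these three orbits. This is where $f\mapsto f+\gamma e_1$ interacts with the $\NF$ coefficients and could a priori mix the parameters.

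For distinctness, I would use invariants.
\begin{itemize}
\item Whether $\varphi$, as a form on $A/A^2$ valued in $A^2/A^3$, is alternating separates $Z_4^c$ from the $F_4^i$.
\item $\dim\Ann(A)$ is $2$ for $F_4^1$ and $1$ for $F_4^2$ and $F_4^3$.
\item $F_4^2$ and $F_4^3$ are separated by whether the distinguished line $\ker\psi\subseteq A/A^2$ consists of elements $v$ with $v\circ v\in A^3\setminus\{0\}$.
\item $Z_4^0$ and $Z_4^1$ are separated by whether $x\circ x=0$ for the kernel line of $\psi$.
\end{itemize}
Each of these can be checked to be invariant under the allowed basis changes.
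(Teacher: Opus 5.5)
Your outline follows the paper's proof closely. You use the same encoding: your $\psi,\chi,\theta$ are the paper's $\lambda,\rho,\chi$. You derive the same single surviving identity, and your case split is equivalent to the paper's, since $\sigma=0\iff\chi=0$ when $\varphi\ne0$ and $\psi\ne0$. You reach the same normal forms, and your invariants are valid. Testing whether the induced form $A/A^2\times A/A^2\to A^2/A^3$ is alternating is equivalent to the paper's test $A^2\circ A=0$, again because $\sigma=0\iff\chi=0$.

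The gap is the step you yourself call the main obstacle. In the case $\sigma\ne0$ you never show that $(\theta(f,e_1),\theta(f,f))$ can be brought to $(0,0)$, $(1,0)$ or $(0,1)$. In the case $\sigma=0$ the reductions are only announced (``should give $Z_4^c$''). Your worry about $f\mapsto f+\gamma e_1$ is misplaced. That substitution is not admissible: $\{x: x\circ A^2=0\}=\ker\psi\oplus A^2$ is an intrinsic subspace, and $f$ must stay in it. The move that matters is $e_1\mapsto e_1+tf$, which you list but do not compute. This is an unfinished computation rather than a missing idea.

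The step closes exactly as in the paper.
\begin{itemize}
\item \textbf{Case $\sigma\ne0$.} After $f\mapsto f-\theta(e_1,f)e_2$ you have $e_1\circ f=0$, $f\circ e_1=be_3$ and $f\circ f=ge_3$. Also $f\circ e_2=e_2\circ f=0$, because $f\in\ker\psi$ and $\chi=2\psi$.
\item Put $e_1'=e_1+tf$, $e_2'=e_1'\circ e_1'=e_2+(tb+t^2g)e_3$ and $e_3'=e_1'\circ e_2'=e_3$. The $\NF_3$-core is preserved.
\item For $f'=f-tg\,e_2'$ one gets $e_1'\circ f'=0$, $f'\circ e_1'=(b-tg)e_3$ and $f'\circ f'=ge_3$. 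So this move acts by $(b,g)\mapsto(b-tg,g)$.
\item Rescaling $e_1\mapsto se_1$, $f\mapsto uf$ acts by $(b,g)\mapsto(ub/s^2,u^2g/s^3)$. Hence $g\ne0$ gives $F_4^3$, $g=0\ne b$ gives $F_4^2$, and $b=g=0$ gives $F_4^1$.
\item \textbf{Case $\sigma=0$.} Set $e_2:=e_1\circ e_4$. Replace $e_4$ by $e_4+\beta e_2$ and redefine $e_2$; this shifts the $e_3$-coefficient of $e_4\circ e_1$ by $\beta$, since $e_2\circ A=0$ and $e_1\circ e_2=e_3$.
\item Then $e_1\mapsto e_1-ae_2$ removes $e_1\circ e_1=ae_3$ without changing the other products. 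Only $e_4\circ e_4=ce_3$ remains, and scaling brings it to $c\in\{0,1\}$.
\end{itemize}
With these computations written out, your argument is complete. Add the remark that the resulting tables satisfy your identity; for $Z_4^c$ this is immediate, since $\chi=\sigma=0$.
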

\begin{proof}
Choose $A^3=\K e_3$, $A^2=\spn{e_2,e_3}$ and a complement $V$ of $A^2$. Since $A^4=0$, $A^3\subseteq\Ann(A)$ and $A^2\circ A^2=0$. There are linear forms $\lambda,\rho$ and bilinear forms $\varphi,\chi$ on $V$ such that
\[x\circ e_2=\lambda(x)e_3,\quad e_2\circ x=\rho(x)e_3,\quad x\circ y=\varphi(x,y)e_2+\chi(x,y)e_3.\]
Here $\lambda\ne0$ because $A^3=A\circ A^2$, and $\varphi\ne0$ because $\dim A^2=2$. The only potentially nontrivial Zinbiel identities are on $x,y,z\in V$:
\begin{equation}\label{bilinear4}\varphi(x,y)\rho(z)=\lambda(x)\big(\varphi(y,z)+\varphi(z,y)\big).\end{equation}
Suppose first $\rho\ne0$. Choose $x_0$ with $\lambda(x_0)=1$ and put $\nu(y)=\varphi(x_0,y)$. Equation~\eqref{bilinear4} gives $\varphi=\lambda\otimes\nu$ and $\varphi+\varphi^\tau=\nu\otimes\rho$. Symmetry implies $\nu=\kappa\rho$ with $\kappa\ne0$. Substitution yields
\[\lambda\otimes\rho+\rho\otimes\lambda=\rho\otimes\rho,\]
so $\rho=2\lambda$ and $\varphi$ has rank one. Choose $e_1\in V$ with $\lambda(e_1)=1$, $e_4\in\ker\lambda$, and redefine $e_2=e_1\circ e_1$, $e_3=e_1\circ e_2$. Then
\[e_1\circ e_1=e_2,\quad e_1\circ e_2=e_3,\quad e_2\circ e_1=2e_3,\]
with remaining possible products $e_1\circ e_4=ae_3$, $e_4\circ e_1=be_3$, $e_4\circ e_4=ge_3$. Replacing $e_4$ by $e_4-ae_2$ gives $a=0$. Next replace $e_1$ by $e_1+te_4$, define new $e_2,e_3$ from its successive products, and replace $e_4$ by $e_4-tge_2$. The remaining parameters become $(b-tg,g)$. If $g\ne0$, choose $t=b/g$ and rescale to get $F_4^3$. If $g=0,b\ne0$, rescale $e_4$ to get $F_4^2$; if both are zero, get $F_4^1$.

Now suppose $\rho=0$. Equation~\eqref{bilinear4} gives $\varphi+\varphi^\tau=0$, so $\varphi$ is nonzero alternating. Choose $e_1$ with $\lambda(e_1)=1$, nonzero $e_4\in\ker\lambda$, and redefine $e_2=e_1\circ e_4$, $e_3=e_1\circ e_2$. Then
\[e_4\circ e_1=-e_2+\delta e_3,\quad e_1\circ e_1=ae_3,\quad e_4\circ e_4=ce_3,\]
with $e_2\circ A=0$. Replacing $e_4$ by $e_4-\delta e_2$ and redefining $e_2$ removes $\delta$. Replacing $e_1$ by $e_1-ae_2$ removes $a$. Scaling $e_1,e_4$ changes $c$ by an arbitrary nonzero factor, giving $c=0$ or $1$. These tables satisfy \eqref{bilinear4}, hence are Zinbiel.

Finally, $A^2\circ A=0$ separates the two $Z$-algebras from the three $F$-algebras. The annihilator of $F_4^1$ has dimension two, those of $F_4^2,F_4^3$ dimension one. Put $U=\{x:x\circ A^2=0\}$. In each remaining pair, the invariant condition $x\circ x=0$ for every $x\in U$ holds for $Z_4^0$ or $F_4^2$, and fails for $Z_4^1$ or $F_4^3$. This proves pairwise nonisomorphism.
\end{proof}
\subsection{Biderivations of the additional algebras}
\begin{lemma}\label{Zder}
A linear map on $Z_4^c$ with $D(e_1)=\sum_{i=1}^4 a_ie_i$ and $D(e_4)=\sum_{i=1}^4 b_ie_i$ is a derivation if and only if
\[a_2=0,\quad b_1=0,\quad b_2=-2ca_4,\quad c(b_4-2a_1)=0,\]
and
\[D(e_2)=(a_1+b_4)e_2+(ca_4+b_2)e_3,\qquad D(e_3)=(2a_1+b_4)e_3.\]
\end{lemma}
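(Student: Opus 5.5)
Since $e_1,e_4$ generate $Z_4^c$ (we have $e_2=e_1\circ e_4$ and $e_3=e_1\circ e_2$), Proposition~\ref{generators} shows that a derivation is determined by $D(e_1)$ and $D(e_4)$. The plan is therefore to \emph{define} $D(e_2)$ and $D(e_3)$ through the Leibniz rule on these two defining products, and then impose the derivation identity on the remaining products. The algebra is graded by powers, with $A^2\circ A^2=0$, $A^2\circ A=0$ (because $\rho=0$), and $e_3\in\Ann$. So only finitely many basis products need checking, and each check reduces to comparing $e_2$- and $e_3$-coefficients.

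First, applying $D$ to $e_2=e_1\circ e_4$ gives
\[D(e_2)=D(e_1)\circ e_4+e_1\circ D(e_4)=a_1e_2+a_4ce_3+b_2e_3+b_4e_2,\]
using $e_1\circ e_4=e_2$, $e_4\circ e_4=ce_3$, $e_1\circ e_1=0$ and $e_1\circ e_2=e_3$. This is the stated formula for $D(e_2)$. Then $D(e_3)=D(e_1)\circ e_2+e_1\circ D(e_2)=a_1e_3+(a_1+b_4)e_3$, since $e_4\circ e_2=0$ and $e_1\circ e_3=0$. This is the stated formula for $D(e_3)$.

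Next I will impose the remaining relations among generator products:
\[e_1\circ e_1=0,\qquad e_4\circ e_1=-e_2,\qquad e_4\circ e_4=ce_3.\]
From $D(e_1)\circ e_1+e_1\circ D(e_1)=0$ I get $a_4(-e_2)+a_4e_2+a_2e_3=0$, which forces $a_2=0$. From $D(e_4)\circ e_1+e_4\circ D(e_1)=-D(e_2)$, the $e_2$-coefficient gives $-b_4-a_1=-(a_1+b_4)$, which holds automatically. The $e_3$-coefficient involves $b_2$ (from $e_2\circ e_1=0$? no: $D(e_4)\circ e_1$ contributes $b_1\cdot 0$), $e_4\circ a_4e_4=ca_4e_3$, and $-(ca_4+b_2)$. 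This yields $b_2=-2ca_4$ up to sign bookkeeping, and the $b_1$ condition should emerge there or from $e_4\circ e_4$. From $e_4\circ e_4=ce_3$, the relation
\[D(e_4)\circ e_4+e_4\circ D(e_4)=c(2a_1+b_4)e_3\]
gives $b_1e_2-b_1e_2+\dots$. I must track the terms $b_1(e_1\circ e_4)+b_1(e_4\circ e_1)=0$, while $b_4\cdot 2ce_3$ on the left is compared with $c(2a_1+b_4)$ on the right. The intended conditions are $b_1=0$ and $c(b_4-2a_1)=0$.

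Finally I will check the relations involving $A^2$ inputs: $e_i\circ e_2$, $e_2\circ e_i$, $e_3\circ\cdot$ and $\cdot\circ e_3$. These should give nothing new except possibly $b_1=0$: applying $D$ to $e_4\circ e_2=0$ yields $b_1e_1\circ e_2=b_1e_3$, which forces $b_1=0$. Similarly, $e_1\circ e_2=e_3$ must be consistent with the computed $D(e_3)$, and this can be confirmed directly. Conversely, if the conditions hold, all basis relations are preserved, so $D$ is a derivation.

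The main obstacle is purely bookkeeping: keeping the signs from $e_4\circ e_1=-e_2$ straight, and making sure that each of the four conditions appears exactly once while every other relation reduces to an identity. I would organize this as a table of the sixteen basis pairs.
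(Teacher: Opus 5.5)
Your proposal is correct and takes essentially the paper's route: you force $D(e_2),D(e_3)$ from $e_2=e_1\circ e_4$ and $e_3=e_1\circ e_2$, then obtain $a_2=0$, $b_2=-2ca_4$, $c(b_4-2a_1)=0$ and $b_1=0$ from $e_1\circ e_1=0$, $e_4\circ e_1=-e_2$, $e_4\circ e_4=ce_3$ and $e_4\circ e_2=0$ respectively, and the remaining pairs are automatic because $A^2\circ A=0$ and $e_3\in\Ann(Z_4^c)$. In the write-up, remove the hedges: $b_1$ does not survive in the $e_4\circ e_1$ or $e_4\circ e_4$ relations and comes only from $e_4\circ e_2=0$, while the $e_3$-coefficients $ca_4=-(ca_4+b_2)$ give $b_2=-2ca_4$ with no sign ambiguity.
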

\begin{proof}
The values at $e_2=e_1\circ e_4$ and $e_3=e_1\circ e_2$ are forced. Apply the derivation identity to $e_1\circ e_1=0$ to get $a_2=0$, to $e_4\circ e_2=0$ to get $b_1=0$, to $e_4\circ e_1=-e_2$ to get $b_2=-2ca_4$, and to $e_4\circ e_4=ce_3$ for the last restriction. Conversely, these conditions verify all four nonzero products and the zero products just used. Every other pair contains $e_2$ on the left or $e_3$ as an input. Its identity holds because the stated images of $e_2,e_3$ lie in $\spn{e_2,e_3}$ and $\K e_3$, respectively, and $b_1=0$.
\end{proof}
\begin{theorem}\label{Zbider}
Write $E_{pq}=E_{p,q}^{e_3}$ for $p,q\in\{1,4\}$.
\begin{enumerate}
\item $\Bider(Z_4^1)=\bann(Z_4^1)$ has basis $E_{11},E_{14},E_{41},E_{44}$ and dimension triple $(4,3,1)$.
\item $\Bider(Z_4^0)$ has basis $E_{11},E_{14},E_{41},E_{44},X,Y$, where $X=E_{1,1}^{e_4}$ and
\[Y(e_1,e_1)=e_1,\quad Y(e_1,e_3)=Y(e_3,e_1)=e_3,\quad Y(e_1,e_4)=Y(e_4,e_1)=-e_4.\]
Its dimension triple is $(6,5,1)$.
\end{enumerate}
In both cases, the skew-symmetric part is $\spn{E_{14}-E_{41}}$.
\end{theorem}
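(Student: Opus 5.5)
The algebra $Z_4^c$ is generated by $e_1,e_4$, since $e_2=e_1\circ e_4$ and $e_3=e_1\circ e_2$. By Proposition~\ref{generators}, a biderivation $B$ is therefore determined by the four values $B(e_1,e_1)$, $B(e_1,e_4)$, $B(e_4,e_1)$, $B(e_4,e_4)$. The plan has three steps. First, constrain these values using Lemma~\ref{Zder} applied to the partial derivations $B(e_1,\cdot)$, $B(e_4,\cdot)$, $B(\cdot,e_1)$, $B(\cdot,e_4)$. Second, use the extra relations among the induced values at $e_2,e_3$. Third, verify that the surviving maps are biderivations. Since $A^2=\spn{e_2,e_3}$ and $\Ann(Z_4^c)=\K e_3$ (the latter from the table: $e_1\circ e_2=e_3$ kills $e_2$, and $e_1\circ e_4,e_4\circ e_1$ kill $e_1,e_4$), Theorem~\ref{ann} gives $\bann(Z_4^c)=\spn{E_{11},E_{14},E_{41},E_{44}}$. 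After subtracting an element of $\bann$, it suffices to determine the components of the generator values outside $\K e_3$.

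For the first step, write $B(e_i,e_j)=\sum_k \beta^{ij}_k e_k$ for $i,j\in\{1,4\}$. Lemma~\ref{Zder}, applied to $B(e_i,\cdot)$ with $a=\beta^{i1}$ and $b=\beta^{i4}$, gives $\beta^{i1}_2=0$, $\beta^{i4}_1=0$, $\beta^{i4}_2=-2c\beta^{i1}_4$ and $c(\beta^{i4}_4-2\beta^{i1}_1)=0$. Applied to $B(\cdot,e_j)$ with $a=\beta^{1j}$ and $b=\beta^{4j}$, it gives $\beta^{1j}_2=0$, $\beta^{4j}_1=0$, $\beta^{4j}_2=-2c\beta^{1j}_4$ and $c(\beta^{4j}_4-2\beta^{1j}_1)=0$.

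The decisive extra constraint, and the step I expect to be the main obstacle, is compatibility of the two induced formulas for $B(e_2,e_j)$ and $B(e_i,e_2)$. Applying \eqref{bi2} to $e_2=e_1\circ e_4$ in the second slot gives $B(e_i,e_2)$ in terms of the row $i$ data. Alternatively, $B(e_i,e_2)$ is the value at $e_i$ of the derivation $B(\cdot,e_2)$, which by \eqref{bi1} on $e_2=e_1\circ e_4$ equals $B(e_1,\cdot)\circ e_4+e_1\circ B(e_4,\cdot)$ composed appropriately. Lemma~\ref{Zder} says that $B(\cdot,e_2)$ must itself be a derivation. So I would compute $B(e_1,e_2)$ and $B(e_4,e_2)$ from the row formulas and impose the conditions of Lemma~\ref{Zder} on this new derivation; for instance, its $e_4$-row must have zero $e_1$-coordinate and its $e_1$-row zero $e_2$-coordinate. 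The same is to be done for $B(\cdot,e_3)$. Concretely, I expect this to force $\beta^{14}_4=\beta^{41}_4=0$, $\beta^{44}_4=0$ and $\beta^{11}_1=-\beta^{14}_4$-type linear relations. The intended outcome is that the only free non-annihilator parameters are $\beta^{11}_4$ (giving $X$) and a common scalar $r=\beta^{11}_1=\beta^{13}_3=-\beta^{14}_4=-\beta^{41}_4$ (giving $Y$), all else vanishing. When $c=1$, the condition $b_4=2a_1$ from Lemma~\ref{Zder} should kill $r$, and $b_2=-2a_4$ combined with the $e_2$-coordinate restrictions should kill $\beta^{11}_4$. That would leave only $\bann$.

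Finally, for $c=0$ I would verify directly that $X$ and $Y$ are biderivations. Both are symmetric, so by Proposition~\ref{sym} only \eqref{bi1} needs checking. $X$ takes values in $\K e_4$ and vanishes on $A^2$ in either slot, so \eqref{bi1} reduces to checking the products $x\circ e_4$ and $e_4\circ y$ on generator triples. For $Y$, the partial maps $Y(\cdot,e_1)$ and $Y(e_1,\cdot)$ should match Lemma~\ref{Zder} with $a_1=1$, $b_4=-1$, so that $D(e_2)=0$ and $D(e_3)=e_3$; this is consistent with $Y(e_3,e_1)=e_3$ and $Y(e_2,e_1)=0$. The partial maps $Y(\cdot,e_4)$ and $Y(\cdot,e_3)$ are handled by a similar direct check. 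The symmetric and skew counts then follow by comparing transposes: $X$, $Y$, $E_{11}$, $E_{44}$ and $E_{14}+E_{41}$ are symmetric, and $E_{14}-E_{41}$ is skew.
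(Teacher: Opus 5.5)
Your plan follows the paper's proof: Lemma~\ref{Zder} on the four partial derivations at the generators, then on the partial derivations at $e_2$, then a check that the partial maps of $X$ and $Y$ are derivations (this is where $c=0$ is needed). Your sufficiency and counting parts are fine. The gap is in the necessity step. It carries the whole content but you leave it as a forecast, and the concrete version you describe does not suffice.

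Write $u=\beta^{11}$, $v=\beta^{14}$, $w=\beta^{41}$, $z=\beta^{44}$. Your first step gives $u_2=v_1=v_2=w_1=w_2=z_1=0$, $cu_4=0$, $z_2=-2cv_4=-2cw_4$ and $c(v_4-2u_1)=c(w_4-2u_1)=cz_4=0$. The formulas in Lemma~\ref{Zder} then give $B(e_1,e_2)=(u_1+v_4)e_2$, $B(e_2,e_1)=(u_1+w_4)e_2$, $B(e_4,e_2)=z_4e_2+(cw_4+z_2)e_3$ and $B(e_2,e_4)=z_4e_2+(cv_4+z_2)e_3$.

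\begin{itemize}
\item Imposing Lemma~\ref{Zder} on $B(\cdot,e_2)$ yields only $v_4=-u_1$ and $z_4=0$.
\item Imposing it on $B(\cdot,e_3)$ yields nothing, since its values $(2u_1+v_4)e_3$ and $z_4e_3$ satisfy every condition automatically.
\item The relation $w_4=-u_1$ comes only from the other derivation $B(e_2,\cdot)$: \eqref{bi2} at $(e_2,e_1,e_1)$ reads $(u_1+w_4)e_3=0$.
\end{itemize}
Your concrete instructions never impose this last relation. For $c=0$ nothing else constrains $w_4$, so you would be left with a spurious seventh parameter. Relatedly, your description of $B(\cdot,e_2)$ as $B(e_1,\cdot)\circ e_4+e_1\circ B(e_4,\cdot)$ via \eqref{bi1} is actually the formula for $B(e_2,\cdot)$. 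The two derivations must be kept apart, and both are needed, as in the paper.

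Second, your sentence predicting $\beta^{14}_4=\beta^{41}_4=0$ alongside $\beta^{11}_1=-\beta^{14}_4$ would force $\beta^{11}_1=0$ and eliminate $Y$. It holds only for $c=1$ and contradicts the outcome you state next. The correct conclusion is $v_4=w_4=-u_1$ and $z_4=0$. Then $c(v_4-2u_1)=-3cu_1=0$ and $z_2=-2cv_4=2cu_1=0$, and together with $cu_4=0$ the generator values are those of $u_1Y+u_4X$ plus annihilator-valued terms when $c=0$, and of annihilator-valued terms alone when $c=1$. Proposition~\ref{generators} then finishes.

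A smaller point: showing that $e_1,e_2,e_4$ individually fail to annihilate does not prove $\Ann(Z_4^c)=\K e_3$. Use $e_1\circ x=x_4e_2+x_2e_3$ and $x\circ e_4=x_1e_2+cx_4e_3$ instead. With these lines written out, your argument coincides with the paper's.
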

\begin{proof}
We have $\Ann(Z_4^c)=\K e_3$ and $Z_4^c/(Z_4^c)^2=\spn{\bar e_1,\bar e_4}$, so Theorem~\ref{ann} gives the four $E_{pq}$. The maps $X,Y$ are symmetric. For $c=0$, Lemma~\ref{Zder} shows their partial maps are derivations: the nonzero ones for $X$ are $e_1\mapsto e_4$; for $Y$ they are $\operatorname{diag}(1,0,1,-1)$, $e_1\mapsto e_3$, and $e_1\mapsto-e_4$.

For necessity, set $u=B(e_1,e_1)$, $v=B(e_1,e_4)$, $w=B(e_4,e_1)$, $z=B(e_4,e_4)$. Apply Lemma~\ref{Zder} to the four derivations fixing either generator in either argument. The restrictions are
\begin{gather*}
u_2=v_1=v_2=w_1=w_2=z_1=0,\qquad cu_4=0,\\
z_2=-2cv_4=-2cw_4,\qquad c(v_4-2u_1)=c(w_4-2u_1)=cz_4=0.
\end{gather*}
The same formulas give
\begin{align*}
B(e_1,e_2)&=(u_1+v_4)e_2,& B(e_2,e_1)&=(u_1+w_4)e_2,\\
B(e_4,e_2)&=z_4e_2+(cw_4+z_2)e_3,&B(e_2,e_4)&=z_4e_2+(cv_4+z_2)e_3.
\end{align*}
Apply Lemma~\ref{Zder} to $B(\cdot,e_2)$ and $B(e_2,\cdot)$. The zero second coordinate of their values at $e_1$ gives $v_4=w_4=-u_1$, and their values at $e_4$ give $z_4=0$. Hence $-3cu_1=0$, so $cu_1=0$ and $z_2=0$. We obtain
\begin{align*}
u&=u_1e_1+u_3e_3+u_4e_4,& v&=-u_1e_4+v_3e_3,\\
w&=-u_1e_4+w_3e_3,& z&=z_3e_3,\qquad cu_1=cu_4=0.
\end{align*}
For $c=1$, only the four annihilator-valued directions remain. For $c=0$, the extra parameters are $u_1Y+u_4X$. These maps agree with $B$ on all generator pairs, so Proposition~\ref{generators} gives equality everywhere. The same values prove independence. Finally, $X,Y$ are symmetric, while the $E$-maps have symmetric dimension three and skew-symmetric dimension one.
\end{proof}

\section{Consequences and exact checks}
The results describe the symmetric and skew-symmetric parts for complex filiform Zinbiel algebras in every dimension.
\begin{corollary}\label{unique}
Let $A$ be a finite-dimensional complex filiform Zinbiel algebra. Then
\[\bskew(A)\subseteq\bann(A)\quad\Longleftrightarrow\quad A\not\cong A_3(-1).\]
For null-filiform algebras, $\bskew(A)=0$.
\end{corollary}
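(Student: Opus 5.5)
The plan is a case check over the complete list of complex filiform Zinbiel algebras. For each algebra on the list we compare the explicit skew-symmetric basis already computed with the annihilator. First we record that the list is exhaustive in every dimension. For $n\ge5$ we use \cite[Theorem~3.5]{AKO}: every algebra is one of $F_n^1,F_n^2,F_n^3$. For $n=4$, Theorem~\ref{class4} gives $F_4^1,F_4^2,F_4^3,Z_4^0,Z_4^1$. For $n=3$, Theorem~\ref{class3} gives $F_3^1$, $A_3(\mu)$ and $G_3$, which includes $F_3^2\cong A_3(0)$ and $F_3^3\cong A_3(1)$. For $n\le2$ we use the degenerate cases. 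Since both sides of the equivalence are isomorphism invariant, it suffices to check each representative.

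For the standard families we read off the skew parts. Theorem~\ref{f1} gives a skew basis $E_{1,n}^{e_m}-E_{n,1}^{e_m}$ and $E_{1,n}^{f}-E_{n,1}^{f}$, valued in $\spn{e_m,f}=\Ann(F_n^1)$; this holds for $n\ge3$, so it covers $F_3^1$. Theorems~\ref{f2} and~\ref{f3} give skew parts spanned by $E_{1,n}^{e_m}-E_{n,1}^{e_m}$ with $e_m\in\Ann$, for $n\ge4$. Theorem~\ref{Zbider} gives $E_{14}-E_{41}$ valued in $\K e_3=\Ann(Z_4^c)$. In dimension three, Theorem~\ref{three}(1),(3) gives $\bskew=\spn{E_{13}-E_{31}}$ with value $e_2\in\Ann$ for $A_3(\mu)$, $\mu\ne-1$, and for $G_3$. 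This also covers $F_3^2,F_3^3$, in agreement with Proposition~\ref{original3}.

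For small dimensions there are two sub-cases. If $n=2$, the only filiform algebra is $A^2=0$ with $\dim A/A^2=2$, so $A$ is abelian. Then $\Ann(A)=A$ and the inclusion is trivial. A degenerate one-dimensional algebra, if it is included, is also abelian. Conversely, for $A_3(-1)$, Example~\ref{exception} exhibits $S\in\bskew$ with $S(e_1,e_3)=e_1\notin\K e_2=\Ann(A_3(-1))$, so the inclusion fails. Since $A_3(\mu)\cong A_3(-1)$ only for $\mu=-1$, and the other families are distinct from it by Theorem~\ref{class3}, the equivalence follows. The null-filiform assertion is exactly Theorem~\ref{null}.

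The main point needing care is the low-dimensional bookkeeping rather than any computation. We must make sure that the standard theorems are invoked only in their valid ranges, namely $n\ge3$ for $F^1$ and $n\ge4$ for $F^2,F^3$. Dimension three must therefore be handled entirely through Theorem~\ref{three} and Proposition~\ref{original3}, given the remark that the generic formulas fail there. We also confirm that every algebra in Theorem~\ref{class3} other than $A_3(-1)$ has zero radical, so the annihilator really is $\K e_2$.
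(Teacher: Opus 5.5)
Your proposal is correct and is essentially the paper's proof. It uses the same exhaustive case check: the cited classification for $n\ge5$, Theorems~\ref{class4} and~\ref{class3} in dimensions four and three, and zero multiplication in dimensions at most two. The skew parts are read off from Theorems~\ref{f1}--\ref{f3}, \ref{three} and~\ref{Zbider}, with Example~\ref{exception} giving the failure for $A_3(-1)$ and Theorem~\ref{null} giving the null-filiform part.

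One harmless slip: your closing claim that every algebra of Theorem~\ref{class3} other than $A_3(-1)$ has zero radical is false for $F_3^1$, whose radical is $\K e_3$ and $\Ann(F_3^1)=\spn{e_2,e_3}$. This does not affect the argument, since you already handle $F_3^1$ through Theorem~\ref{f1}. Moreover, the inclusion only needs $e_2\in A^2\subseteq\Ann(A)$, which holds for every three-dimensional filiform algebra because $A^3=0$.
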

\begin{proof}
For $n\ge5$, use \cite[Theorem~3.5]{AKO} and Theorems~\ref{f1}--\ref{f3}. In dimensions three and four, completeness follows from Theorems~\ref{class3}, \ref{class4}; the biderivation descriptions show that all skew-symmetric directions are annihilator-valued except the two in Theorem~\ref{three}(2). Example~\ref{exception} proves failure there. In dimension two the filiform condition forces zero multiplication, so $\Ann(A)=A$. In dimension one the only Zinbiel product is zero: if $e\circ e=ae$, \eqref{zin} gives $a^2=2a^2$, hence $a=0$. The null-filiform assertion is Theorem~\ref{null}.
\end{proof}
\begin{remark}
Theorem~\ref{ann} classifies annihilator-valued biderivations of any algebra. It does not assert that every skew-symmetric biderivation is annihilator-valued. Corollary~\ref{unique} is a separate consequence specific to complex filiform Zinbiel algebras.
\end{remark}
\begin{table}[htbp]\centering
\begin{tabular}{@{}lrrrr@{}}\toprule
Algebra & $\dim\Ann A$ & $\dim\Bider A$ & $\dim\bsym A$ & $\dim\bskew A$\\\midrule
Zero algebra, dimension 1&1&1&1&0\\
Zero algebra, dimension 2&2&8&6&2\\
$F_3^1$&2&9&7&2\\
$A_3(\mu)$, $\mu\ne-1$&1&6&5&1\\
$A_3(-1)$&1&12&9&3\\
$G_3$&1&6&5&1\\
$F_4^1$&2&10&8&2\\
$F_4^2$&1&7&6&1\\
$F_4^3$&1&5&4&1\\
$Z_4^0$&1&6&5&1\\
$Z_4^1$&1&4&3&1\\\bottomrule
\end{tabular}
\caption{Filiform algebras in dimensions at most four, with the one-dimensional convention of Section~2. The parameter $\mu$ is considered modulo the reciprocal equivalence of Theorem~\ref{class3}.}
\end{table}
\subsection{The coefficient system}
For an explicit finite-dimensional check without symmetry assumptions, write
\[e_i\circ e_j=\sum_{s=1}^n c_{ij}^se_s,\qquad B(e_i,e_j)=\sum_{k=1}^n b_{ij}^ke_k.\]
Comparing the $e_t$-coordinates of \eqref{bi1}--\eqref{bi2} gives, for every $1\le i,j,\ell,t\le n$,
\begin{align}
\sum_{s=1}^n c_{ij}^sb_{s\ell}^t&=\sum_{k=1}^n b_{i\ell}^kc_{kj}^t+\sum_{k=1}^n b_{j\ell}^kc_{ik}^t,\label{system1}\\
\sum_{s=1}^n c_{j\ell}^sb_{is}^t&=\sum_{k=1}^n b_{ij}^kc_{k\ell}^t+\sum_{k=1}^n b_{i\ell}^kc_{jk}^t.\label{system2}
\end{align}
These conditions are necessary and sufficient because the identities are trilinear. Triples for which a product is zero must still be included, since their right-hand sides may impose constraints.

The supplementary script \texttt{verify\_biderivations.py} constructs this system and performs Gaussian elimination over $\mathbb Q$. If $E$ is the coefficient matrix, then $\dim\Bider(A)=n^3-\rank E$. Appending $b_{ij}^k-b_{ji}^k=0$ or $b_{ij}^k+b_{ji}^k=0$ gives the symmetric or skew-symmetric dimension. For rational tables, ranks are unchanged on extension from $\mathbb Q$ to $\C$.

The script tests $\NF_n$ for $1\le n\le6$, all three standard families for $3\le n\le6$, $A_3(\mu)$ at $\mu=-1,0,1,2$, and $G_3,Z_4^0,Z_4^1$. It checks the Zinbiel identities, the dimension triples, the explicit basis maps in the tested dimensions, and the skew-symmetric map in Example~\ref{exception}. The computed dimensions agree with the applicable theorems. These finite checks supplement the proofs; the arbitrary-dimensional and arbitrary-parameter assertions rest on the mathematical arguments above.

\section{Further questions}
The generator reduction in Proposition~\ref{generators} applies beyond the families studied here. For a larger class of nilpotent Zinbiel algebras, it is natural to seek structural restrictions on allowable values at generator pairs, rather than solve the full coefficient system for each multiplication table.

A second question is to characterize Zinbiel algebras satisfying $\bskew(A)\subseteq\bann(A)$. Theorem~\ref{ann} identifies the right-hand subspace explicitly, but Example~\ref{exception} shows that the Zinbiel identity alone does not force the inclusion. The construction in Theorem~\ref{conformal} suggests that conformal endomorphism algebras can detect additional skew-symmetric directions.

Finally, these computations can be investigated in positive characteristic. Our proofs use characteristic zero in the polynomial model, factorial formulas, and scalar divisions. A positive-characteristic classification requires separate arguments even when the multiplication tables remain meaningful.

\section*{Declarations}
\paragraph{Funding.} The authors received no specific grant from any funding agency in the public, commercial, or not-for-profit sectors for this research.
\paragraph{Author contributions.} Arislanbay Abdireymov and Bakhtiyor Yusupov contributed equally to the conceptualization, mathematical investigation, development and verification of the proofs, and preparation and revision of the manuscript.
\paragraph{Competing interests.} The authors declare no competing interests.
\paragraph{Data and code availability.} All mathematical results and multiplication tables needed to understand the proofs are included in this article. The supplementary material contains the exact-arithmetic verification script \texttt{verify\_biderivations.py} and its output. No external empirical datasets were used.
\paragraph{Ethics approval and consent.} This theoretical mathematical study involved no human participants, human data, or animals. Ethics approval and consent to participate are not applicable. Consent to publish personal participant data is not applicable.
\paragraph{Use of AI-assisted tools.} OpenAI ChatGPT was used in preparing this version to assist with language editing, LaTeX formatting, bibliographic organization, and preparation of supplementary verification code. The authors bear responsibility for the final content.
\paragraph{Corresponding author.} Bakhtiyor Yusupov; email: \href{mailto:baxtiyor_yusupov_93@mail.ru}{\texttt{baxtiyor\_yusupov\_93@mail.ru}}.

\end{document}